\documentclass[11pt]{article}

\usepackage{geometry}
\geometry{letterpaper}
\geometry{margin=1.25in}
\usepackage{fancyhdr, titling}

\newcommand\arxivversion

\usepackage[nottoc]{tocbibind}

\usepackage{common_eBY}
\usepackage{common_macros}
\usepackage{arxiv_common_eBY}
\newcommand{\drafttitle}{Post-selection inference for e-value based confidence intervals}
\hypersetup{
    colorlinks=false,
    linkcolor=blue,
    filecolor=magenta,
    urlcolor=cyan,
    pdftitle={\drafttitle},
    pdfpagemode=FullScreen,
}

\hypersetup{pdfauthor={Ziyu Xu, Ruodu Wang, Aaditya Ramdas},
            pdftitle={\drafttitle}}
\title{\drafttitle}
\author{Ziyu Xu$^1$, Ruodu Wang$^3$, Aaditya Ramdas$^{1, 2}$\\
\newline
\and
Departments of $^1$Statistics and $^2$Machine Learning, Carnegie Mellon University\\
$^3$Department of Statistics and Actuarial Science, University of Waterloo\\
\newline
\and
\texttt{\{xzy,aramdas\}@cmu.edu, wang@uwaterloo.ca}}
\begin{document}
\maketitle

\begin{abstract}
    Suppose that one can construct a valid $(1-\delta)$-confidence interval (CI) for each of $K$ parameters of potential interest. If a data analyst uses an arbitrary data-dependent criterion to select some subset $S$ of parameters, then the aforementioned CIs for the selected parameters are no longer valid due to selection bias. We design a new method to adjust the intervals in order to control the false coverage rate (FCR).
The main established method is the ``BY procedure''  by Benjamini and Yekutieli (JASA, 2005). The BY guarantees require certain restrictions on the selection criterion and on the dependence between the CIs. We propose a new simple method which, in contrast, is valid under any dependence structure between the original CIs, and any (unknown) selection criterion, but which only applies to a special, yet broad, class of CIs that we call e-CIs. To elaborate, our procedure simply reports $(1-\delta|S|/K)$-CIs for the selected parameters, and we prove that it controls the FCR at $\delta$ for confidence intervals that implicitly invert e-values; examples include those constructed via supermartingale methods, via universal inference, or via Chernoff-style bounds, among others.
The e-BY procedure is admissible, and recovers the BY procedure as a special case via a particular calibrator.
Our work also has implications for post-selection inference in sequential settings, since it applies at stopping times, to continuously-monitored confidence sequences, and under bandit sampling.
We demonstrate the efficacy of our procedure using numerical simulations and real A/B testing data from Twitter.
 \end{abstract}

\tableofcontents

\section{Introduction}
\label{sec:Intro}

One of the most classical problems in statistics is the problem of parameter estimation e.g.\ estimating the mean of a distribution. The time-tested solution is to produce a confidence interval (CI) in the parameter space that covers the true parameter with high probability. However, many scientists are not simply interested in estimating a single parameter --- they may have many \textit{potentially interesting} parameters to estimate concurrently.

For example, a scientist might be experimenting with $K$ vaccines for a strain of virus.
She may only be interested in reporting CIs for the vaccines that the data suggest to be effective. However, she does not know which vaccines may be effective until after she looks at the data. A reasonable thing she might do is to report the usual 95\% CIs for the vaccines where the CI is positive (i.e., the entire interval for the treatment effect is above zero). Consequently, she is using the \textit{same data} to both select the parameters she wishes to estimate, and construct estimates of these parameters. As a result, the reported (uncorrected) CIs do not provide valid statistical coverage. To see this, consider a scenario where none of the vaccines reduce mortality. Any reported CI will have a coverage probability of 0\%, since the CI for a vaccine is reported only if it is positive, meaning that it excludes the true effect of zero.

\chadded{The above example illustrates the issue of selection bias for post-selection inference, as prominently noted by \citet{benjamini_false_discovery_2005a}. Motivated by this example, let us quickly introduce the formal problem setup.

Let $\Pcal$ denote the universe of all possible distributions for the data.
Let $\vartheta : \mathcal{P} \to \Theta$ denote a functional (or parameter) that maps distributions to parameter values lying in some set $\Theta$. 
A scientist observes some data, \(\mathbf{X} = (X_1, \dots, X_K)\) that is drawn from an unknown distribution, $\pdist^*$, and we are potentially interested in the values of $K$ of its functionals\footnote{Technically these functionals could each lie in different sets $\Theta$ but this complicates notation, and we will anyway not explicitly need these sets later in the paper. Note that each $\theta_i$ need not be bijective.
For example, $\vartheta_i$ could capture the median of a distribution.} $\vartheta_1,\dots,\vartheta_K$, but our interest in them depends on the unknown parameter values \(\boldsymbol\theta^*  \coloneqq (\theta_1^*, \dots, \theta_K^*)\), where $\theta^*_i := \vartheta_i(\pdist^*)$.

Let $\mathbb{E}_\pdist$ and $\mathbb{P}_\pdist$ denote the expectation and probability under a distribution $\pdist$, respectively, although we sometimes suppress the subscript in the case of $\pdist^*$ for simplicity. 
For each $i \in [K]$, we assume that from $X_i$, the scientist can construct a marginal ($1 - \alpha$)-confidence interval for $\theta^*_i$, for any desired level $\alpha \in [0,1]$, which is a set $C_i(\alpha) \subseteq \Theta$ 
such that
\(
\mathbb{P}(\theta^*_i \notin C_i(\alpha)) \leq \alpha,
\)
or more explicitly, for any $\pdist \in \Pcal$, we would have
\(
\mathbb{P}_{\pdist}(\vartheta_i(\pdist) \notin C_i(\alpha)) \leq \alpha. 
\)
    
    The scientist uses the data $\mathbf{X}$ to select a subset of ``interesting'' parameters, \(\selset \subseteq [K]\), using some potentially complex data-dependent selection rule \(\selalg: \mathbf{X} \mapsto \selset \). The scientist must then devise confidence levels for the CI of each selected parameter, $\{\alpha_i\}_{i \in \selset}$, that \emph{can depend on the data }$\mathbf{X}$. The  \emph{false coverage proportion} (FCP) and \emph{false coverage rate} (FCR) of such a procedure are:
\begin{align}
    \FCP \coloneqq \frac{\sum_{i \in \selset}\ind{\theta_i^* \not\in C_i(\alpha_i)}}{|\selset| \vee 1}, \qquad \FCR \coloneqq \expect\left[\FCP\right],
\end{align} where $a \vee b = \max(a,b)$. Our goal is to design a method for choosing $\{\alpha_i\}_{i \in \selset}$ which guarantees $\FCR \leq \delta$ for a predefined level $\delta \in [0, 1]$ provided by the scientists in advance,
\emph{regardless of what the selection rule $\selalg$ is, and in particular even if the rule is unknown and we only observe the selected set $\selset = \selalg (\mathbf X)$.}} \Cref{fig:PostSelectionFlow} illustrates the setup of this post-selection inference problem.
\begin{figure}[h]
    \centering
    \includegraphics[trim={0 3cm 0 1cm},clip,width=0.7\textwidth]{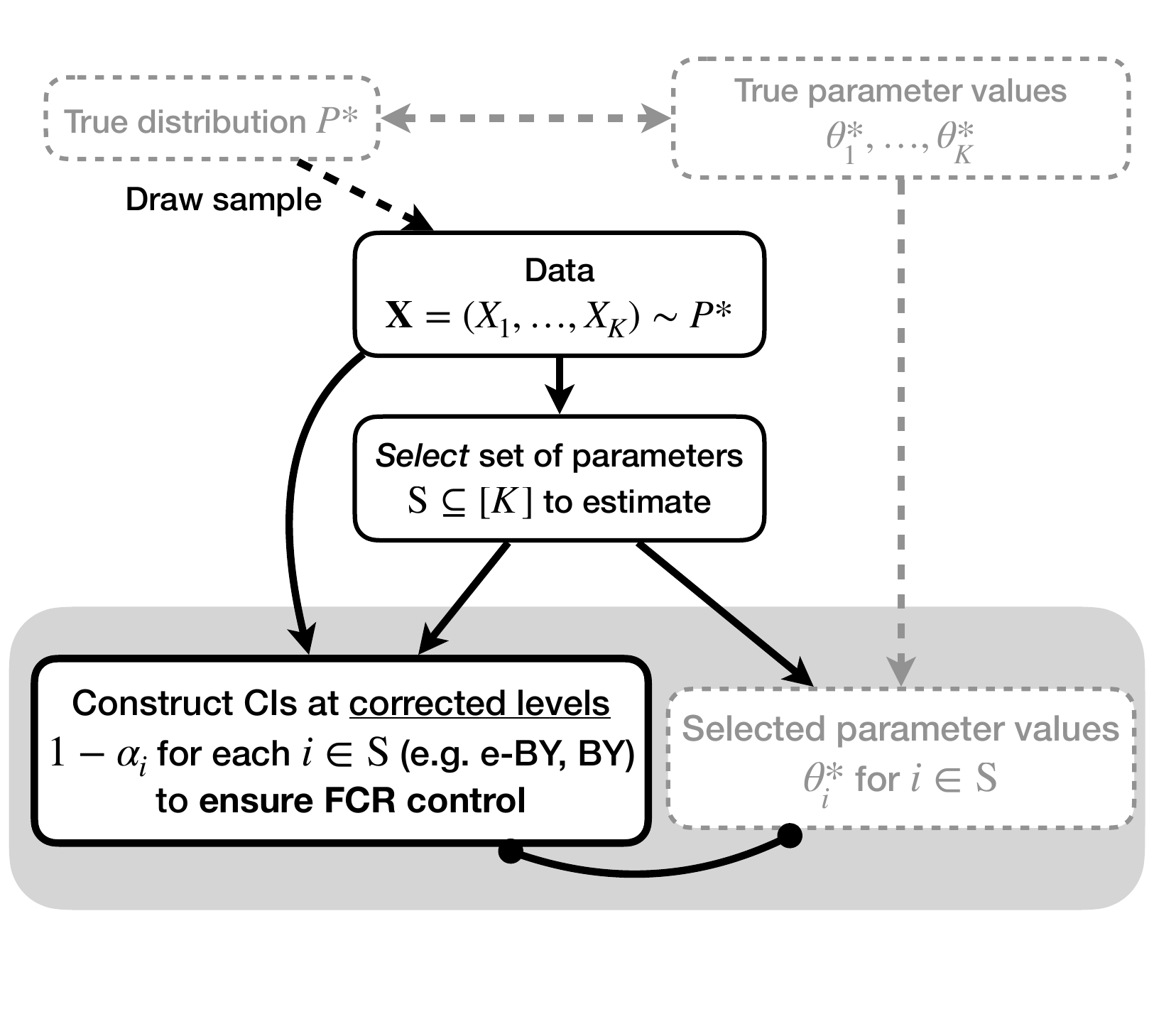}
    \caption{The post-selection inference with confidence intervals (CI) problem. We must choose corrected confidence levels \((1 - \alpha_i)\) for the marginal CI constructed for each \(\theta_i^*\) that is selected. The goal is to correct for the bias introduced by the selection rule and still provide a coverage guarantee for the resulting CIs. The coverage metric that we are interested in is the expected proportion of selected parameters that are not covered by their respective CIs, that is, the false coverage rate (FCR). Our corrected confidence level must guarantee that the FCR is below some fixed level of error \(\delta \in [0, 1]\).}
    \label{fig:PostSelectionFlow}
\end{figure}

We consider FCR over other error metrics for two main reasons. First, constructing a ``conditional CI'' which provides a coverage probability guarantee conditional on the selected set, $\selset$, requires knowing the selection \emph{rule} $\selalg$ beforehand --- while the scientist may sometimes be willing to provide this before seeing the data (and stick to it after seeing the data), this requirement still limits the usage of any such method because a scientist may wish to explore the data informally before deciding on a set $\selset$. Our e-BY method will overcome this limitation. Another option is to obtain a simultaneous coverage guarantee over all parameters, but this usually requires an overly conservative Bonferroni correction. We discuss these other metrics in more detail in \Cref{sec:CoverageTypes}. With that said, we will focus our attention on FCR control in this work.

\chadded{Our primary point of comparison is the BY procedure that was proposed in the same aforementioned paper by \citet{benjamini_false_discovery_2005a}. The BY procedure's choice of \(\{\alpha_i\}_{i \in \selset}\) and resulting guarantees depend upon  assumptions (or knowledge) of the dependence structure in $\mathbf{X}$ and the selection algorithm $\selalg$.
Under certain restrictions (omitted here for brevity) on $\selalg$, the BY procedure sets $\alpha_i  = \delta |\selset| / K$ to ensure that the FCR is controlled at level $\delta$ for mutually independent $X_1, \dots X_K$.
However, when no such assumptions can be made (i.e., under arbitrary dependence and an unknown selection rule) the BY procedure sets $\alpha_i = \delta |\selset| / (K\ell_K)$, where $\ell_K \coloneqq \sum_{i = 1}^K i^{-1} \approx \log K$ is the $K$th harmonic number. Clearly, the BY procedure produces much more conservative CIs when no assumptions can be made about dependence or selection.}

\chadded{In this paper, we introduce the \emph{e-BY procedure}, which achieves the best of both worlds: it requires no restrictions on the dependence structure or selection rule and produces CIs with larger error levels $\alpha_i = \delta |\selset| / K$  (yielding tighter CIs) and ensures $\FCR \leq \delta$ \emph{without any requirements on the dependence structure or selection rule.} However, its applicability is restricted to a smaller, but still quite broad, class of CIs. We refer to this class of intervals as ``e-confidence intervals'' (e-CIs) and we introduce them below.}

\paragraph{E-confidence interval (e-CI).} \chadded{
A crucial concept to defining an e-CI is the \textit{e-value} \citep{vovk_evalues_calibration_2020,shafer_testing_betting_2021,grunwald_safe_testing_2020,ramdas_admissible_2020}, a concept recently formulated in the hypothesis testing literature as an alternative to the classical p-value.
\begin{definition}
    A nonnegative random variable \(E\) is called an \emph{e-value} w.r.t.\ a set of distributions $\mathcal Q$, if \(\sup_{\pdist \in \mathcal{Q}} \expect_\pdist[E] \leq 1\).
\end{definition}
Above, $\mathcal Q$ represents the null hypothesis being tested and $E$ quantifies evidence against the null (large e-values are more evidence). As a simple example, we note that if $\mathcal{Q}=\{Q\}$ is a singleton, then the only admissible e-values are likelihood ratios of the form $dR/dQ$ for some alternative distribution $R$ \citep{ramdas_admissible_2020}.
}

\chadded{Given a particular functional $\vartheta$ of interest, let $\mathcal{P}_{\theta} := \{\pdist \in \mathcal{P}: \vartheta(\pdist) = \theta \}$ denote the set of all distributions with functional value $\theta$.
\begin{definition}
    Let $\{E(\theta)\}_{\theta \in \Theta}$ be a family of e-values such that for each $\theta \in \Theta$, $E(\theta)$ is an e-value w.r.t.\ $\mathcal{P}_{\theta}$.  
    For any \(\alpha \in [0, 1]\), we define the $(1-\alpha)$-\emph{e-confidence interval (e-CI)} 
as follows:
\begin{align}
    C(\alpha) = \left\{\theta \in \Theta: E(\theta) < \frac{1}{\alpha}\right\}.
    \label{eqn:ECI}
\end{align}
\end{definition}
\begin{proposition}
    Every e-CI is a valid CI.
\end{proposition}
\begin{proof}
   For any $\pdist \in \mathcal P_\theta$, we have \(\mathbb{P}_\pdist(\theta \not\in C(\alpha)) = \mathbb{P}_\pdist(E(\theta) \geq  1 / \alpha) \leq \alpha\), where the last inequality follows by Markov's inequality, since $\expect_\pdist[E(\theta)] \leq 1$ because $E(\theta)$ is an e-value with respect to any $P \in \mathcal P_\theta$.
\end{proof}}

\begin{figure}[h!]
    \centering
    \includegraphics[width=0.8\textwidth]{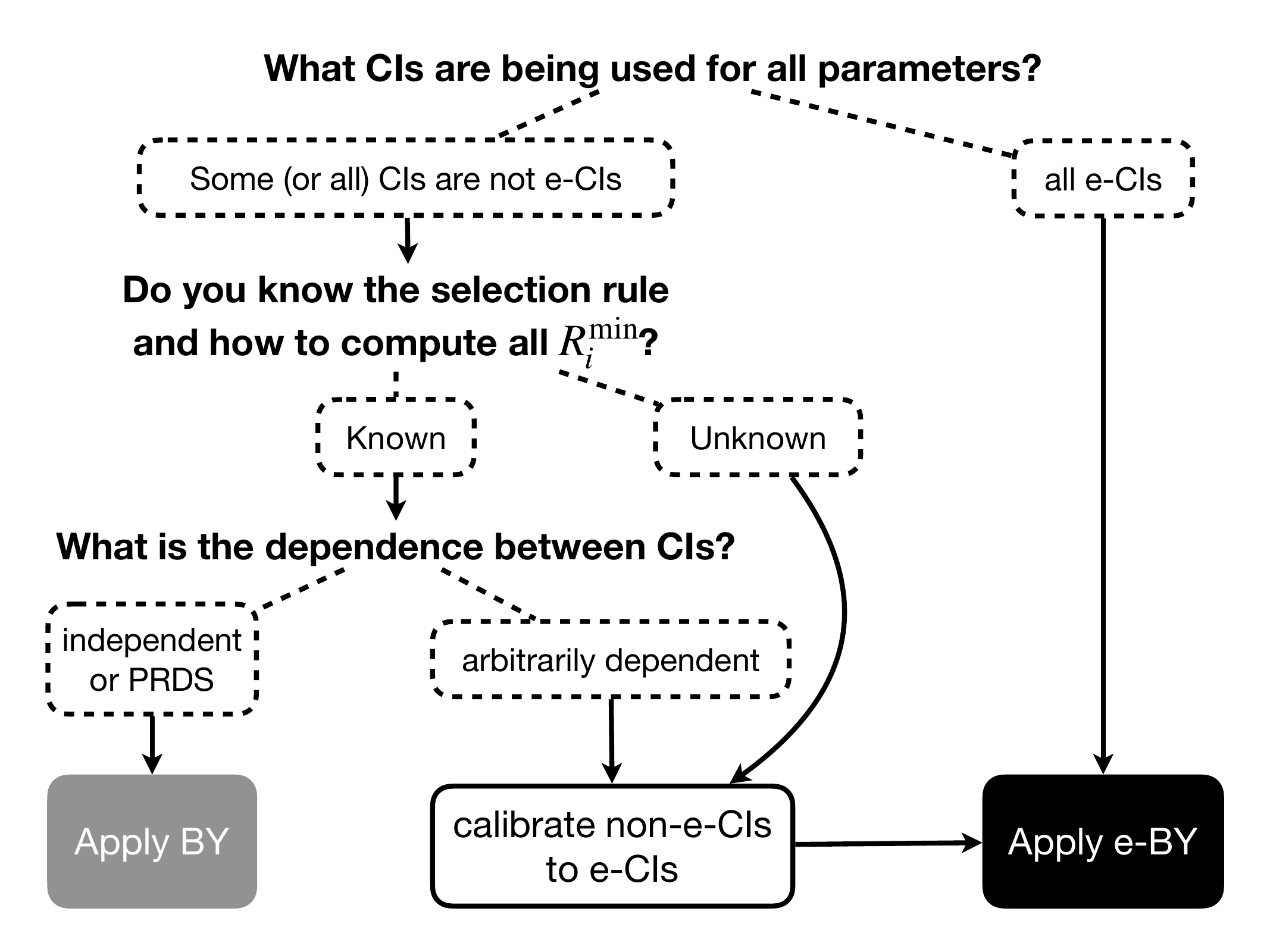}
    \caption{A flowchart for deciding when to use e-BY vs.\ BY. The only case where the BY procedure should be employed is when strong assumptions are made upon both the selection rule and the type of data dependence, while the e-BY procedure is a uniform improvement over the BY procedure in all other cases. If e-CIs can only be constructed for some parameters, and standard CIs for others, we can \textit{calibrate} standard CIs to e-CIs. In fact, calibration reduces the BY procedure to a special case of e-BY (\Cref{sec:Calibration}).}
    \label{fig:GeneralBY}
\end{figure}

\paragraph{Our contributions.} Our key contributions are twofold. First, we study the aforementioned novel ``e-CI'' subclass of CIs.
We show that a reasonably large array of existing CIs are already e-CIs. This new categorization sets the stage for our second main contribution: the e-BY procedure for FCR control (\Cref{def:eBY}). The e-BY procedure shows that one can attain tighter CIs than the BY procedure when no assumptions are made on the dependence structure or selection rule. The e-BY procedure is quite straightforward --- it simply sets the confidence level of the CI for each selected parameter to be \(1 - \delta|\selset|/K\), and this is sufficient to guarantee $\FCR \leq \delta$.

    On the other hand, the BY procedure \emph{requires knowledge of all counterfactual selections one would make on different data}. The BY procedure relies on quantities concerning these counterfactual selections (that cannot be derived without knowing the selection rule beforehand), and has to pay a costly correction factor (approximately logarithmic in the number of initial parameters, $K$) on the confidence levels when the selection rule is unknown.
    The BY procedure has such limits \emph{even when the data $X_1, \dots, X_K$ are mutually independent}.
    Thus, BY has no guarantees when parameters are selected by the scientist in an ad hoc fashion, which is often the case (e.g., exploratory data analysis, selection based on domain knowledge).
    In addition, the BY procedure --- even in optimal conditions of having independent data and a known selection rule --- cannot produce confidence levels that are tighter than those produced by e-BY. As a result, the e-BY procedure dominates the BY procedure when e-CIs are available or when one cannot make assumptions about either the selection rule or dependence structure. We elaborate on these points below:
\begin{enumerate}
\item [D1.]
\label{item:SelectionProcedure}
\textit{Selection rule must be known for the BY procedure to yield the tightest possible confidence intervals.} Under independence (and a more general condition of PRDS --- see \citet[Section 1.3]{benjamini_control_false_2001}), the BY procedure guarantees FCR control when the \(i\)th CI has a confidence level of \(1 -\delta |\selset|/ K\) only when the selection rule satisfies certain properties.
When the selection rule is not guaranteed to satisfy such properties, the BY procedure can only provide FCR control level $\delta$ by yielding CIs at a confidence level of \(1 - \delta|\selset| / (K\ell_K)\).
On the contrary, the e-BY procedure can provide the same level of FCR control by producing CIs at a confidence level of \(1 - \delta|\selset|/K\) i.e.\ the smallest level that can be achieved by the BY procedure even when the data is independent and the selection rule satisfies certain conditions.
\item[D2.]
\label{item:Dependence}
\textit{CIs produced by the BY procedure are larger when the CIs are arbitrarily dependent.} Even if the selection rule \(\selalg\) is known, the BY procedure produces more
conservative intervals than the e-BY procedure when the CIs for each parameter, $(C_1, \dots, C_K)$, can be arbitrarily dependent. The BY procedure must correct the confidence levels to be \(1 - \delta|\selset|/(K\ell_K)\) while the e-BY procedure maintains the same confidence levels of \(1 -\delta|\selset|/K\).
Arbitrarily dependent data can arise when we make multiple measurements of a single unit of experimentation, for example, measuring different gene expression levels in a single cell. Dependence is also prevalent in settings where data are collected sequentially --- we explore this usage further in \Cref{sec:ABTesting}, where we analyze both procedures using real data collected by an information technology company (Twitter) for the purposes of product testing.
    \item[D3.]
    \label{item:RequireECI}
    \textit{The e-BY procedure applies to e-CIs.} Of course, the e-BY procedure is beneficial solely when e-CIs can be constructed for the parameters of interest. Although ``e-CI'' is a new term, many existing CIs are e-CIs. These include non-asymptotic CIs based on Chernoff-type inequalities such as Hoeffding (\Cref{sec:BatchChernoff}), Bernstein, and empirical Bernstein CIs \citep{hoeffding_probability_inequalities_1963,bernsteinsergei_theory_probability_1927,maurer_empirical_bernstein_2009}, universal inference CIs \citep{wasserman_universal_inference_2020} (\Cref{sec:UI}), and CIs formed by stopping confidence sequences \citep{robbins_statistical_methods_1970,howard2020time,howard2021time,waudby-smith_estimating_means_2021,wang_catonistyle_confidence_2022} (\Cref{sec:Sequential}). Thus, finding powerful e-CIs is not a major limitation in many applications. Further, we will demonstrate in \Cref{sec:Calibration} that we can \textit{always} construct an e-CI from a CI, and one can use specific calibrators to recover the BY procedure (\Cref{sec:special-case}). Hence, if one has both CIs and e-CIs, calibrating the CIs to e-CIs and applying e-BY dominates the BY procedure.
\end{enumerate}

We summarize the tradeoffs between the e-BY and BY procedures in Table~\ref{tab:MethodTable}.

\begin{table}[h!]
    \centering
    \begin{tabular}{|c||c|c|c|}
        \hline
        \textbf{Procedure} & \textbf{Type of CI} & \textbf{Selection rule} & \textbf{Dependence}\\
        \hline
        { BY} & general & penalized for unknown \(\selalg\) (D1) & penalized (D2)\\
        \hline
        { e-BY} & e-CIs (D3) & no penalty & no penalty\\
        \hline
    \end{tabular}
    \caption{Tradeoffs between the e-BY procedure in this paper and the BY procedure \citep{benjamini_false_discovery_2005a}. The confidence levels of the e-BY procedure are not penalized for certain selection rules or for different types of dependencies between CIs. However, the e-BY procedure only being applicable to e-CIs, a special class of CIs derived from e-values \citep{vovk_evalues_calibration_2020,shafer_testing_betting_2021,grunwald_safe_testing_2020,ramdas_admissible_2020}.}
    \label{tab:MethodTable}
\end{table}

\paragraph{Outline} Since the e-BY procedure applies only to e-CIs, we describe several useful examples of e-CIs in \Cref{sec:ECI}. In \Cref{sec:eBY}, we prove that e-BY controls FCR under any selection rule and dependence structure, and contrast it with the BY procedure. To complement our validity results, we also show that the e-BY procedure is unimprovable in some notions --- \Cref{sec:SharpFCR} shows that the e-BY procedure has sharp control of the FCR and \Cref{sec:Admissible} shows that e-BY is admissible among a general class of e-CI reporting procedures. Lastly, we compare the empirical behavior of the e-BY and BY procedures on simulations in \Cref{sec:Simulations} and real user metric data from A/B testing experiments at Twitter in
\Cref{sec:ABTesting}.

\section{Confidence intervals obtained via e-values (e-CIs)}
\label{sec:ECI}

E-CIs are are referred to ``warranty sets'' by \citet{shafer_testing_betting_2021} in the context of game-theoretic statistics, and \citet{vovk2022confidence} defined a similar notion under the name of an \textit{e-confidence region}, although their version allows for thresholds smaller than 1.
Recent work has shown that e-CIs are pertinent to or arise naturally in many settings. For example, universal inference \citep{wasserman_universal_inference_2020} allows us to construct e-CIs for estimation of certain parameters in nonparametric and high-dimensional settings where the only requirement is having access to the likelihood functions for each null distribution. For sequential testing and estimation problems, e-processes, the sequential counterpart of e-values, play a central role in constructing time-uniform versions of CIs, known as confidence sequences \citep{robbins_statistical_methods_1970,howard2021time,waudby-smith_estimating_means_2021,wang_catonistyle_confidence_2022}. Further, many classical statistical tools already construct e-CIs, since they are implicitly using e-values or e-processes under the hood e.g.\ likelihood ratio tests and Chernoff methods~\cite{howard2020time}. Hence, limiting ourselves to e-CIs does not restrict the regime we may perform efficient inference in. In some cases, e-CIs might be the best tool we have for the job. For example, when estimating the mean of bounded random variables, the e-CIs in \citet{waudby-smith_estimating_means_2021} are empirically among the tightest known CIs for sequential estimation of the mean; see also \cite{orabona_tight_concentrations_2021}.
On the other hand, when we are in a setting where e-CIs are not the default choice of CI, one can \textit{calibrate} any CI into an e-CI. In fact, we can demonstrate that the BY procedure is a special case of the e-BY procedure through calibration. We elaborate on a couple of the aforementioned examples below.

\subsection{E-CIs from universal inference}
\label{sec:UI}
A particularly useful method for constructing e-CIs is through universal inference \citep{wasserman_universal_inference_2020}. Universal inference is a generalization of the standard likelihood ratio test that allows for testing against composite null hypotheses. Hence, e-CIs formed through universal inference only require access to likelihood functions (or a function that upper bounds the likelihood functions) for each possible distribution.

The simplest form of the universal inference e-CI can be derived from the split likelihood ratio test of \citet{wasserman_universal_inference_2020}. Assume that we are given \(A_1, \dots, A_{2n}\) i.i.d.\ samples. We first split this sample into two datasets \(D_0\) and \(D_1\) --- for simplicity we can assume that the two datasets are of equal sizes of \(n\) samples, although they do not have to be.  Let \(\pdist_1 \in \mathcal{P}\) be some distribution that we choose based solely on \(D_1\). In essence, \(\pdist_1\) is a guess of the ``most likely alternative'' and chosen to make the resulting e-value as large as possible. Consequently, the more accurately \(\pdist_1\) models the true distribution, the tighter the resulting e-CI will be. Let \(\mathcal{L}_0(\pdist)\) denote the likelihood of \(D_0\) under a distribution, \(\pdist\). We define the universal inference e-value as follows:
\begin{align}
    E_{\mathrm{UI}}(\theta) \coloneqq \frac{\mathcal{L}_0(\pdist_1)}{\argmax_{\pdist_0 \in \Pcal_\theta} \mathcal{L}_0(\pdist_0)}.
    \label{eqn:UIEvalue}
\end{align}

\begin{proposition}[Split universal inference e-CI \citep{wasserman_universal_inference_2020}]
For any parameter \(\theta \in \Theta\), \(E_{\mathrm{UI}}(\theta)\) is an e-value w.r.t.\ to \(\theta\) for any choice of procedure that derives \text{\normalfont \(\pdist_1\)} from \(D_1\). Consequently, the following set is a valid e-CI associated with \(E_{\mathrm{UI}}\):
\begin{align}
    C_{\mathrm{UI}}(\alpha) \coloneqq \left\{\theta: \frac{\mathcal{L}_0({\normalfont \pdist}_1)}{{\normalfont \argmax}_{\pdist_0 \in \Pcal_\theta} \mathcal{L}_0(\pdist_0)} < \frac{1}{\alpha}\right\}.
\end{align}
\end{proposition}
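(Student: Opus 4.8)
The plan is to verify the two defining properties of an e-value for $E^{\mathrm{UI}}(\theta)$ with respect to the composite set $\Pcal_\theta = \{\pdist \in \probspace : T(\pdist) = \theta\}$, and then to invoke the preceding proposition to conclude that $C^{\mathrm{UI}}(\alpha)$ is a valid $(1-\alpha)$-CI. Nonnegativity is immediate, since likelihoods are nonnegative and the denominator is positive wherever the ratio is defined. The substantive claim is therefore the bound $\sup_{\pdist \in \Pcal_\theta}\expect_\pdist[E^{\mathrm{UI}}(\theta)] \le 1$.

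First I would fix an arbitrary true distribution $\pdist \in \Pcal_\theta$ and exploit the fact that the denominator is a maximum of the likelihood over all of $\Pcal_\theta$. Since $\pdist$ itself lies in $\Pcal_\theta$, we have $\max_{\pdist_0 \in \Pcal_\theta}\mathcal{L}_0(\pdist_0) \ge \mathcal{L}_0(\pdist)$, so that
\[
E^{\mathrm{UI}}(\theta) \le \frac{\mathcal{L}_0(\pdist_1)}{\mathcal{L}_0(\pdist)}.
\]
This reduces the problem to bounding the expectation of an ordinary likelihood ratio, which is exactly where the sample splitting does its work.

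The key step is to condition on $D_1$. Because $\pdist_1$ is chosen as a function of $D_1$ alone, and $D_0$ is independent of $D_1$ (the $2n$ samples are i.i.d.\ and split), conditional on $D_1$ the alternative $\pdist_1$ is a fixed distribution while $D_0$ remains distributed according to $\pdist$. Hence the conditional expectation becomes a change-of-measure integral:
\[
\expect_\pdist\!\left[\frac{\mathcal{L}_0(\pdist_1)}{\mathcal{L}_0(\pdist)} \,\middle|\, D_1\right] = \int \frac{\mathcal{L}_0(\pdist_1)}{\mathcal{L}_0(\pdist)}\,\mathcal{L}_0(\pdist)\,d\mu = \int \mathcal{L}_0(\pdist_1)\,d\mu \le 1,
\]
where $\mu$ is a dominating measure for $D_0$ and the last inequality holds because integrating a density over its sample space yields at most $1$. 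Applying the tower property then gives $\expect_\pdist[E^{\mathrm{UI}}(\theta)] \le \expect_\pdist[1] = 1$, and taking the supremum over $\pdist \in \Pcal_\theta$ establishes that $E^{\mathrm{UI}}(\theta)$ is an e-value. The conclusion for $C^{\mathrm{UI}}$ is then immediate from the proposition that every e-CI is a valid CI, since $C^{\mathrm{UI}}(\alpha)$ is precisely the e-CI $\{\theta : E^{\mathrm{UI}}(\theta) < 1/\alpha\}$.

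I expect the main obstacle to be the careful measure-theoretic bookkeeping in the conditional expectation: one must argue rigorously that $\pdist_1$ acts as a deterministic distribution once $D_1$ is fixed, that $D_0$ stays $\pdist$-distributed and independent of $D_1$, and that the likelihood-ratio integral collapses cleanly (handling the event $\mathcal{L}_0(\pdist) = 0$, where the density vanishes $\pdist$-almost surely, and allowing $\int \mathcal{L}_0(\pdist_1)\,d\mu$ to fall strictly below $1$ when absolute continuity fails). The lower bound on the denominator by $\mathcal{L}_0(\pdist)$ is what makes essential use of the hypothesis that the true $\pdist$ satisfies $T(\pdist) = \theta$, i.e.\ that we are arguing with respect to $\Pcal_\theta$; this is also why the guarantee is agnostic to \emph{how} $\pdist_1$ is derived from $D_1$.
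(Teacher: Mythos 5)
Your proposal is correct and follows essentially the same argument as the paper: lower-bound the denominator by $\mathcal{L}_0(\pdist)$ using membership of the true distribution in $\Pcal_\theta$, condition on $D_1$ so that $\pdist_1$ is fixed, collapse the likelihood-ratio integral to $\int \mathcal{L}_0(\pdist_1)\,d\mu \le 1$, and finish by the tower property. Your treatment is in fact slightly more careful than the paper's (explicitly quantifying over all $\pdist \in \Pcal_\theta$ rather than a single true $\mathbb{P}$, and allowing the integral to fall below $1$ when absolute continuity fails), but the route is the same.
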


As a result, universal inference ensures that a nontrivial e-CI exists in any situation where the likelihood (or an upper bound on it) is known. In many settings, universal inference remains the only method for deriving nontrivial tests and CIs e.g.\ estimating the number of mixtures in a Gaussian mixture model when the dimension is greater than 1.

\subsection{E-CIs from stopped confidence sequences}
\label{sec:Sequential}

In the sequential setting, we assume that we receive samples of data, \(A_1, A_2, \dots\), in a stream. The goal is to produce a CI that is valid when the number of samples is data dependent, i.e., the user has the option to continuously monitor the data and continue or stop sampling based on the values observed so far. A typical sampling strategy is for a scientist to stop sampling as soon as she has evidence to reject the null hypothesis.  \citet{ramdas_admissible_2020} showed that any admissible sequence of CIs that is valid under adaptive sampling must be an e-CI.

Denote the sigma-algebra formed at each time step that contains all the random samples seen so far as \(\filtration_t \coloneqq \sigma(\{A_i\}_{i \leq t})\), with \((\filtration_t)_{t \in \naturals}\) being the corresponding filtration. A random variable \(\tau \in \naturals\) is a \textit{stopping time}  if \(\ind{\tau = t}\) is measurable w.r.t.\ \(\filtration_t\). This means that whether $\tau$ ``stops'' at time $t \in \naturals$ can only depend on $(A_1, \dots A_t)$. Since samples arrive one at a time, we can consider a sequence of intervals \((C^t(\alpha))\), where \(C^t(\alpha)\) is the CI we construct after collecting $t$ samples. However, a sequence of $(1-\alpha)$-CIs cannot guarantee that the CI at a stopping time, \(C^\tau(\alpha)\), is also a $(1 - \alpha)$-CI. In fact, \citet{howard2021time} provide simulations showing that the coverage probability can be drastically less than \((1 - \alpha)\). Thus, we want to strengthen the definition of a standard CI by providing a coverage guarantee at stopping times, as opposed to only fixed sample sizes.
\chadded{
\begin{definition}
    A $(1 - \alpha)$-\emph{confidence sequence} for a functional $\vartheta$ is a sequence of intervals $(C^t(\alpha))_{t \in \naturals}$ such that \(C^\tau(\alpha)\) is an \((1 - \alpha)\)-CI for any stopping time \(\tau\). It also has the following equivalent definition \citep[Lemma 2]{ramdas_admissible_2020}: for any $\pdist \in \Pcal$ and $\alpha \in [0, 1]$, we have
\begin{align}
    \mathbb{P}_{\pdist}(\vartheta(\pdist) \in C^t(\alpha) \text{ for all }t \in \naturals) \geq 1 - \alpha .
\end{align}
\end{definition}}

The formulation of a CS in the above definitions emphasizes the \emph{time-uniform} coverage a CS provides --- a CS ensures the probability $\theta^*$ is in the CS at every time step is high. One way to construct such an object is using a family of \emph{e-processes}, which are the sequential versions of the e-values.
\begin{definition}
    Let the index set \(\mathbb{I}\) be $\naturals$ or $(0,\infty)$. An \emph{e-process} w.r.t.\ to some filtration \((\filtration_t)_{t \in \mathbb{I}}\) and set of distributions \(\Pcal\) is defined as a sequence of random variables \((E^t)_{t \in \mathbb{I}}\) which are all nonnegative under any \(\pdist \in \Pcal\) and satisfy \(\sup_{\pdist \in \Pcal} \expect_{\pdist}[E^\tau] \leq 1\), i.e.,\ \(E^\tau\) is an e-value w.r.t.\ \(\Pcal\), for stopping time \(\tau\) w.r.t. \((\filtration_t)\).
\end{definition}

For recent literature relevant to e-processes, see \cite{grunwald_safe_testing_2020,ramdas_admissible_2020,howard2020time,howard2021time}.
E-processes are a superset of nonnegative supermartingales (one can see nonnegative supermartingales are e-processes as a result of the optional stopping theorem). Thus, a sequence of intervals constructed through applying the formula in \eqref{eqn:ECI} to a family of e-processes \((E^t(\theta))_{t \in \naturals}\) is a CS. For example, the following is a Hoeffding-esque e-process for testing the mean of a bounded random variable \citep{hoeffding_probability_inequalities_1963,waudby-smith_estimating_means_2021}:
\begin{align}
    E_{\mathrm{Hoef}}^t(\theta) \coloneqq \exp\left(\sum\limits_{i = 1}^t\lambda_i(A_i - \theta) - \frac{\lambda_i^2}{8}\right),
\end{align} where \((\lambda_t)\) is predictable w.r.t.\ \((\filtration_t)\) , i.e.,\ \(\lambda_t\) is measurable w.r.t.\ \(\filtration_{t - 1}\) for all \(t \in \naturals\).
We can define a two-sided e-CI based on \((E_{\mathrm{Hoef}}^t(\theta))\) as follows:
\begin{align}
    C_{\mathrm{Hoef}}^t(\alpha) \coloneqq \left(\frac{\sum\limits_{i = 1}^t \lambda_i A_i}{\sum\limits_{i = 1}^t \lambda_i} \pm \frac{\log(2 / \alpha) + \frac{1}{8}\sum\limits_{i = 1}^t \lambda_i^2}{\sum\limits_{i = 1}^t \lambda_i}\right).
\end{align} Notice that setting \(\lambda_t = \sqrt{\log(2 / \alpha) / n}\) recovers the classic Hoeffding CI \citep{hoeffding_probability_inequalities_1963} for the specific sample size of \(n \in \naturals\) and fixed $\alpha \in [0, 1]$. We discuss how to optimize these parameters in the post-selection inference setting where there are  multiple potential values of $\alpha$ in \Cref{sec:BatchChernoff}.

Another implication of the above formulation is that the e-process view allows one to extend the coverage guarantee of existing CIs to be valid sequentially.
This coverage improvement is not only applicable to Hoeffding's, but also a wide range of Chernoff type inequalities \citep{howard2020time,howard2021time,wang_catonistyle_confidence_2022}.  Similarly, the universal inference e-CI formulated in \eqref{eqn:UIEvalue} can also be extended to a sequential form, where \(\pdist_1\) may be recomputed at each time step based on the previously observed samples. For bounded random variables, one can construct e-processes that directly correspond to the wealth process of a sequence of fair gambles \citep{waudby-smith_estimating_means_2021}.
Since e-CIs are necessary for sequential inference, the e-BY procedure is a strict improvement over the BY procedure in settings that allow for adaptive sampling and stopping such as in multi-armed bandits and the A/B testing setting we discuss in \Cref{sec:ABTesting}.

\subsection{E-CIs constructed from CIs through calibration}
\label{sec:Calibration}

Although the definition of an e-CI is much more specific than a general CI, we will present a method for calibrating arbitrary marginal CIs to e-CIs using a method for p-value to e-value calibration \citep{vovk_logic_probability_1993b, sellke_calibration_values_2001a,shafer_test_martingales_2011,shafer_game_theoretic_2019,vovk_evalues_calibration_2020}.

\begin{definition}
A \emph{calibrator} is a nonincreasing, upper semicontinuous function \(f: [0, 1] \rightarrow [0, \infty]\) where \(\int_{0}^1 f(x)\ dx \leq 1\).
\label{def:Calibrator}
\end{definition}

Define \(f^{-1}\) to be the dual of the calibrator \(f\) , i.e.,\ \(f^{-1}(x) \coloneqq \sup \{p: f(p) \geq x\}\). When \(f\) is invertible, \(f^{-1}\) is the inverse of \(f\). Using \(f^{-1}\), we can convert any CI to an e-CI. Also we define two properties about any set-valued CI function \(C: [0, 1] \mapsto  2^{\Theta}\). Let $C$ be \textit{nonincreasing} if, for any $\alpha, \beta \in [0, 1]$, $\alpha \leq \beta$ implies that $C(\alpha)\supseteq C(\beta)$. Further, define \textit{continuous from below} to be the property that for any $\alpha \in [0, 1]$, $C(\alpha) = \bigcup\limits_{\beta > \alpha} C(\beta)$.
\begin{theorem}
Let \(C: [0, 1] \mapsto  2^{\Theta}\) be a nonincreasing function that is continuous from below such that \(C(\alpha)\) produces a \((1 - \alpha)\)-CI, and \(f\) be a calibrator (with dual calibrator \(f^{-1}\)). Then, the following set is a \((1 - \alpha)\)-e-CI:
\begin{align}
    C^{\mathrm{cal}}(\alpha) &= C\left(f^{-1}\left(\frac{1}{\alpha}\right)\right).
\end{align}\label{thm:Calibration}
\end{theorem}
We provide a proof of \Cref{thm:Calibration} in \Cref{sec:CalibrationProof}. As a consequence of this theorem, we can construct a nontrivial e-CI from any nontrivial CI and a dual calibrator. Of course, this prompts the question: when should one use e-CIs with the e-BY procedure instead of CIs with the BY procedure?
Generally speaking, the calibrated e-CI will be looser than the original CI, since \(f(x) = 1/x\) is not a calibrator. However, we show in \Cref{sec:eBY} that applying e-BY to calibrated e-CIs is actually equivalent to applying the BY procedure to the original CIs when the selection rule is unknown or the original CIs are dependent. Thus, calibrating CIs to e-CIs creates e-CIs across all parameters, in the case where some parameters initially do not have e-CIs.
\section{The e-BY procedure}
\label{sec:eBY}

Now, we formally define the e-BY procedure as follows.

\begin{definition}
    The \emph{e-BY procedure} at level \(\delta \in [0, 1]\) sets \(\alpha_i = \delta |\selset| / K\) for each \(i \in \selset\).
    \label{def:eBY}
\end{definition}
We show that a FCR bound can be proven quite simply given the fact that e-CIs are constructed for each selected parameter.
\begin{theorem}
    Let \(C_i(\alpha)\) be a \((1 - \alpha)\)-e-CI for each \(i \in [K]\) and $\alpha \in [0, 1]$. Then, the e-BY procedure in \Cref{def:eBY} ensures \(\FCR \leq \delta\) for any \(\delta \in (0, 1)\) under any dependence structure between \(X_1, \dots, X_K\), and any selection rule \(\selalg\).
\label{thm:eBY}
\end{theorem}
\begin{proof}
We directly show an upper bound for the FCR as follows:
\begin{align}
    \text{FCR} &= \mathbb{E}\left[ \frac{\sum_{i \in \selset} \ind{\theta_i^* \notin C_i(\delta |\selset|/K)} }{|\selset| \vee 1} \right]\\
               &= \mathbb{E}\left[ \frac{\sum_{i \in [K]} \ind{E_i(\theta_i^*)|\selset|\delta / K > 1} \cdot \ind{i \in S} }{|\selset| \vee 1} \right]\\
    &\leq \sum_{i \in [K]} \mathbb{E}\left[ \frac{E_i(\theta_i^*) |\selset| \delta} {K(|\selset| \vee 1)} \right] =  \sum_{i \in [K]} \frac{\delta}{K} \mathbb{E}\bigg[E_i(\theta_i^*)  \cdot \frac{ |\selset|}{|\selset| \vee 1} \bigg] \leq \delta,
\end{align} where \(\ind{\cdot}\) is the indicator function. The first inequality is because $\ind{x>1}\le x$ for all $x\ge 0$. The second inequality is a result of the definition of the e-value for $\theta^*_i$ having its expectation under $\pdist^*$ be upper bounded by 1. This achieves our desired bound.
\end{proof}

\begin{remark}
    FCR control of the e-BY procedure implies FDR control of the e-BH procedure \citep{wang_false_2020}, while the converse is not true. The arbitrary nature of $\selset$ is unique to the post-selection inference problem (as e-BH can only reject ``self-consistent'' \citep{blanchard_two_simple_2008} sets of hypotheses), and defining the concept of e-CIs is key to achieving FCR control. We expand on the relationship between these procedures in \Cref{sec:MultipleTesting} and visualize their relationships in \Cref{fig:MultipleTestingFlow}. We also introduce a directional variant of e-BH that uses the FCR control of the e-BY procedure to achieve directional FDR control in \Cref{sec:FSR}.
\end{remark}

\paragraph{False coverage control for data-dependent \(\delta\).} An interesting result of this proof is that we retain a form of control on the FCP even if \(\delta\) is chosen data dependently. Define \(\FCP(\delta')\) to be the FCP of a post-selection inference procedure at level \(\delta'\). \begin{corollary}\label{corollary:supfcr}
The following is always true for the e-BY procedure (regardless of selection rule or dependence in \(\mathbf{X}\)):
\begin{align}
    \expect\left[\sup_{\delta' \in (0, 1]} \frac{\FCP(\delta')}{\delta'}\right] \leq 1. \label{eqn:SupremumBound}
\end{align} Here, \(\FCP(\delta')\) is FCP of the e-BY procedure at level \(\delta'\) , i.e.,\ \(\alpha_i = \delta' |\selset| / K\) for all \(i \in \selset\).
\end{corollary}
Satisfying \eqref{eqn:SupremumBound} is a stronger condition than FCR control, since \eqref{eqn:SupremumBound} implies FCR control for any fixed \(\delta \in [0, 1]\).
 This error guarantee follows from an argument similar to the proof of \Cref{thm:eBY}.
\chadded{When testing $K$ hypothesis to control the false discovery rate, we note that a similar result for the false discovery proportion (FDP) of the e-BH procedure is implied by this corollary: 
\begin{align}
    \expect\left[\sup_{\delta' \in (0, 1]} \frac{\mathrm{FDP}(\delta')}{\delta'}\right] \leq 1. \label{eqn:ebh-fdp}
\end{align}
This can also be observed directly from the proof of e-BH and we include the details in \Cref{sec:EBHtoEBY} for completeness. When specialized to a single hypothesis test, the latter observation amounts to saying that for any e-value $E$, we have
\[
\expect\left[\sup_{\delta' \in (0, 1]} \frac{\ind{E \geq 1/\delta'}}{\delta'}\right] \leq 1,
\]
which is captured in~\cite[Lemma 1]{wang_false_2020}.
One can view the post-hoc type-I safety discussed by \citet{grunwald_neyman-pearson_2023} as a rephrasing of this latter result.}

\subsection{Comparison with the BY procedure}

The FCR guarantee for e-BY in \Cref{thm:eBY} solely relies on the definition of an e-CI. Since the expectation property of e-values holds regardless of the type of dependence, proving FCR control of e-BY does not require fine grained analysis of how the distribution of an e-value changes when conditioned on other e-values. On the other hand, the BY procedure does require more detailed analysis because its proof only uses the marginal coverage property of each CI. Hence, the BY algorithm requires different levels of corrections based on the dependence structure and selection rule.
\begin{definition}
    The \emph{BY procedure} at level \(\delta \in [0, 1]\) sets \(\alpha_i = \delta R_i^{\min} / K\) if \(X_1, \dots X_K\) are independent (or PRDS \citep{benjamini_false_discovery_2005a}), \(R_i^{\min}\), as formulated in \eqref{eqn:Rmin}, is known, and \(R_i^{\min} > |\selset| / \ell_K\). Otherwise, it sets \(\alpha_i = \delta |\selset| / (K\ell_K)\).
\end{definition}
\begin{fact}[Theorems 1 and 4 from BY \citep{benjamini_false_discovery_2005a}]
The BY procedure ensures that \(\FCR \leq \delta\). \label{fact:BY}
\end{fact}
\vspace{-10pt}
A key difference in the tightness of CIs between the e-BY and BY procedures is how the selection rule affects the confidence levels chosen for the CIs. In the e-BY procedure, the confidence levels are proportional to \(|\selset|\), the number of selected parameters. On the other hand, the BY procedure requires knowledge of \(R_i^{\min}\), which is formulated as follows for each \(i \in \selset\):
\begin{align}
R_{i}^{\min} \coloneqq \min\ \{&|\selalg(\mathbf{X}_{i\rightarrow x_i})|: x_i \in \samplespace_i, ~i \in \selalg(\mathbf{X}_{i\rightarrow x_i})\},
\label{eqn:Rmin}
\end{align} where $\mathbf{X}_{i\rightarrow x_i} \in \samplespace$ is the data $\mathbf{X}$ with the $i$th component set to $x_i$.
\paragraph{Computing \(R_i^{\min}\) requires knowledge and analysis of the selection rule} Clearly, \(R^{\min}_i\) is a quantity that has to be calculated from the selection rule, \(\selalg\). In practice, however, \(R^{\min}_i\) may be impossible to calculate. The selection rule may not be fully specified or the counterfactual behavior of the selection rule may be difficult to analyze.
Post-selection inference is often used for \textit{exploratory data analysis} in the sense described by \citet{goeman_multiple_testing_2011}.
Hence, the selection rule is often \textit{not} known by the scientist ahead of time --- she is developing her selection plan on the fly as she explores the data!
Another common scenario is that scientist carrying out the experiment, performing analysis, and ultimately producing the CIs may not be the one deciding upon the selection to report e.g.\ the scientist's manager makes the decision. In either case, the selection rule is not known.
As an aside, the e-BY procedure can also be used by the manager in the prior example to produce CIs herself, since she does not need to know the experimental design and dependencies in the data to construct CIs using the e-BY procedure.
Further, even if the selection rule was known beforehand, it still might be difficult to analyze \(R_i^{\min}\). For example, the selection procedure could be iterative: at each iteration, the scientist considers a selection set, observes some property of the CIs produced by the e-BY procedure for the candidate selection set and then selects a new candidate selection set based on her observations. Eventually, the scientist may finalize the selection set based on some sort of convergence criteria. In this selection scheme, the full procedure is not specified, and quite difficult to specify since it requires knowing what the scientist would do at every iteration if experimental data were different. Thus, it is often impractical to assume \(R_i^{\min}\) is known.

The other main advantage of the e-BY procedure can be seen when the dependence between CIs is arbitrary --- there is a clear gap between the confidence levels for the BY procedure and the e-BY procedure. The BY procedure requires an extra correction of \(\ell_K \approx \log K\) on the error probability under arbitrary dependence, while the e-BY procedure behaves the same way under any form of dependence. Thus, if the CIs are e-CIs, the e-BY procedure yields tighter CIs than the BY procedure when the data is arbitrarily dependent or the selection rule is unknown or has \(R_i^{\min} \leq |\selset| / \ell_K\).
\subsection{BY is special case of e-BY through e-CI calibration}
\label{sec:special-case}
By using a specific calibrator with the method for calibrating CIs into e-CIs discussed in \Cref{sec:Calibration}, we can show that the BY procedure (with no assumptions on dependence or selection rule) is a special case of the e-BY procedure. Define the following calibrator when there are \(K\) hypotheses and the desired FCR control is \(\delta\):
\begin{align}
    f^{\rmBY(\alpha, K)}(x) \coloneqq \frac{K}{\alpha}
    \left(\left\lceil \frac{x}{\alpha/ (K\ell_K)} \right\rceil \vee 1 \right)^{-1}\ind{x \leq \alpha / \ell_K}.
\end{align}
\begin{proposition}
\(f^{\rmBY(\delta, K)}\) is an upper semicontinuous calibrator for all \(\delta \in (0, 1)\) and \(K \in \naturals\).
\end{proposition}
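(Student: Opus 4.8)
The plan is to verify directly the three defining requirements of a calibrator for $f \coloneqq f^{\mathrm{BY}(\delta,K)}$: that it is a nonincreasing map into $[0,\infty]$, that $\int_0^1 f(x)\,dx \le 1$, and that it is upper semicontinuous. The monotonicity requirement is essentially immediate from the structure $f(x) = \frac{K}{\delta}\cdot\frac{1}{\lceil\,\cdot\,\rceil}$: the argument of the ceiling is affine and increasing in $x$, the ceiling $\lceil\,\cdot\,\rceil$ is a nondecreasing integer-valued step function, and $t \mapsto \frac{K}{\delta t}$ is strictly decreasing on $(0,\infty)$; composing a strictly decreasing function with a nondecreasing one yields a nonincreasing step function. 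At $x=0$ the ceiling equals $0$, so $f(0)=+\infty$, which is permitted since the codomain is $[0,\infty]$.

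For upper semicontinuity I would exploit the one-sided continuity of the ceiling. The map $\lceil\,\cdot\,\rceil$ is left-continuous and lower semicontinuous: at an integer it takes the smaller value and jumps upward only as the argument crosses to the right. Precomposing with the strictly decreasing continuous function $t\mapsto \frac{K}{\delta t}$ flips this into upper semicontinuity, so at each of the finitely many jump points $x_0$ the value $f(x_0)$ coincides with the larger, left-hand one-sided value, giving $\limsup_{y\to x_0} f(y) = f(x_0)$. Thus USC reduces to a one-line check at the jump points, with continuity elsewhere being trivial.

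The substantive step, and the one I expect to be the main obstacle, is the integral bound, which is where the harmonic normalization does its work. I would partition the support of $f$ into the maximal subintervals on which the ceiling --- and hence $f$ --- is constant; under the substitution $u = Kx/\delta$ these are the preimages of the unit intervals $(k-1,k]$, each of length $\delta/K$, with $k$ ranging over $1,\dots,K$ as $x$ ranges over $(0,\delta]$ (and $f\equiv 0$ beyond, matching the possible selection sizes $|\selset|\in\{1,\dots,K\}$). On the $k$-th piece $f$ equals $\frac{K}{\delta\ell_K k}$, so summing value times length gives
\begin{align*}
    \int_0^1 f(x)\,dx = \sum_{k=1}^{K} \frac{K}{\delta\ell_K k}\cdot\frac{\delta}{K} = \frac{1}{\ell_K}\sum_{k=1}^{K}\frac{1}{k} = \frac{\ell_K}{\ell_K} = 1 \le 1.
\end{align*}
The crux is precisely this bookkeeping: confirming that the factor $\ell_K$ in the normalization is cancelled exactly by the harmonic sum $\sum_{k=1}^K 1/k = \ell_K$, and that the ceiling truncates the sum at $k=K$ so that no additional tail contribution inflates the integral past $1$. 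Once the constants are tracked carefully through the substitution and the final (possibly partial) interval is accounted for, the displayed chain of equalities closes the proof.
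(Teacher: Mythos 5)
Your verification of monotonicity and your USC argument are fine (and in fact go beyond the paper, whose proof only asserts ``clearly nonincreasing'' and never addresses upper semicontinuity at all). The problem is in the step you yourself identify as the crux. First, a bookkeeping issue: in the paper's definition \(f^{\mathrm{BY}(\delta,K)}(x) = \frac{K}{\delta\lceil K\ell_K x/\delta\rceil}\) the factor \(\ell_K\) sits \emph{inside} the ceiling, so the maximal constancy pieces are \(\bigl(\tfrac{\delta(k-1)}{K\ell_K}, \tfrac{\delta k}{K\ell_K}\bigr]\), of length \(\delta/(K\ell_K)\), on which \(f\) takes the value \(K/(\delta k)\). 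Your decomposition --- pieces of length \(\delta/K\) carrying the value \(K/(\delta\ell_K k)\), with \(x\) ranging over \((0,\delta]\) --- describes the different function \(K/(\delta\ell_K\lceil Kx/\delta\rceil)\). The two misplacements cancel in the product ``value times length,'' which is why your sum still lands on \(1\), but the computation does not describe the \(f\) that was defined.

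Second, and more seriously, your assertion that ``the ceiling truncates the sum at \(k=K\) so that no additional tail contribution inflates the integral'' is false for the displayed formula: the ceiling truncates nothing. For \(x > \delta/\ell_K\) one has \(\lceil K\ell_K x/\delta\rceil > K\) and \(f(x)>0\), and the substitution \(u = K\ell_K x/\delta\) gives
\begin{align*}
\int_0^1 f^{\mathrm{BY}(\delta,K)}(x)\,dx \;=\; \frac{1}{\ell_K}\int_0^{K\ell_K/\delta} \frac{du}{\lceil u\rceil} \;=\; \frac{1}{\ell_K}\left(\ell_K + \int_K^{K\ell_K/\delta} \frac{du}{\lceil u\rceil}\right) \;>\; 1,
\end{align*}
since \(K\ell_K/\delta > K\) whenever \(\delta<1\). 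Concretely, for \(K=1\), \(\delta=1/2\), the formula gives \(f=2\) on \((0,\tfrac12]\) and \(f=1\) on \((\tfrac12,1]\), so \(\int_0^1 f = 3/2\). Thus, read literally, the proposition is false, and it becomes true only if one \emph{additionally defines} \(f\) to vanish once the ceiling exceeds \(K\) (equivalently, for \(x>\delta/\ell_K\)) --- evidently the intended definition, since only the values \(K/(\delta k)\) with \(k\) at most the number of selected parameters (hence at most \(K\)) ever enter the BY correspondence. In fairness, the paper's own proof has exactly the same blind spot: it sums only over \(j=1,\dots,K\) and never mentions the tail. So your instinct matches the intended argument, but the truncation must be imposed explicitly as part of the definition; it cannot be extracted from the ceiling, and a proof that claims it can has a genuine hole. (Once the truncation is imposed, also re-check USC at the truncation point \(x=\delta/\ell_K\): \(f\) takes the left-hand value \(1/\delta\) there and drops to \(0\) on the right, so upper semicontinuity still holds.)
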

\begin{proof}
Clearly, \(f^{\rmBY(\delta, K)}\) is nonincreasing, and we note that:
\begin{align}
    \int_0^1 f^{\rmBY(\delta, K)}(x)\ dx &=\sum\limits_{j = 1}^K \frac{\delta}{K \ell_K} \cdot \frac{K}{j\delta} = \frac{1}{\ell_K} \sum\limits_{j = 1}^K \frac{1}{j} = 1.
\end{align} Hence, we proved that \(f^{\rmBY(\delta, K)}\) satisfies the properties to be a calibrator.
\end{proof}

By our result in \Cref{thm:Calibration}, we can now show that the e-BY procedure with \(f^{\rmBY(\delta, K)}\) calibrated e-CIs is identical to the BY procedure under arbitrary dependence.
\begin{corollary}
For any arbitrary CI constructors, denoted as \(C_i: [0, 1] \mapsto 2^{\Theta_i}\) for each \(i \in [K]\), that are continuous from below. The BY procedure produces the same CIs as the e-BY procedure with e-CI constructors \(C_i^{\mathrm{cal}}\) that were calibrated through  \(f^{\rmBY(\delta, K)}\) for each \(i \in [K]\).
\end{corollary} Note that one take the limit from below at every $\alpha \in [0, 1]$ of a nonincreasing CI constructor $C$ to create a CI constructor $C'$ that is continuous from below. $C'$ has valid coverage and is no wider than the original $C$ for every $\alpha$, so the continuity from below is only a technical requirement.

The above corollary follows from the fact that \Cref{thm:Calibration} and the choice of \(f^{\rmBY(\delta, K)}\) as calibrator implies the following:
\begin{align}
    C_i\left(\frac{\delta j}{K\ell_K}\right) = C_i^{\mathrm{cal}}\left({f^{\rmBY(\delta, K)}}^{-1}\left(\frac{K \ell_K}{\delta j}\right)\right) = C_i^{\mathrm{cal}}\left(\frac{\delta j}{K}\right)  \text{ for all }j \in [K].
\end{align}

Thus, the output of the BY procedure is equivalent to the output of the e-BY procedure with calibrated e-CIs for all possible sizes of the selection set \(\selset\). As a result, in a situation where there may be a mix of e-CIs and regular CIs that are arbitrarily dependent and the regular CIs, one can always calibrate the regular CIs to e-CIs using \(f^{\rmBY(\delta, K)}\) and then apply the e-BY procedure to produce CIs that are as tight as directly applying the BY procedure.

\chadded{\begin{remark}

We have not compared other calibrators, mostly because there is no uniformly best choice of calibrator to use: different ones will perform better in different settings. We recommend using the BY calibrator by default. The BY calibrator is a natural choice since it allows the resulting output of the e-BY procedure on calibrated CI to be no worse than the BY procedure (under dependence or unknown selection rule). It is also exact, in the sense for the resulting e-CI, the e-value corresponding to the true value of the parameter, $E(\theta^*)$, will have an expectation exactly equal to 1 if the original CI has exact coverage.

We have not defined admissibility for CI calibrators, but we conjecture that for some suitable definition, the BY calibrator we introduce is an admissible CI calibrator of CIs into e-CIs (since the corresponding p-to-e value calibrator is admissible). Furthermore, it is well suited to use with the e-BY procedure, since e-BY only produces CIs with error levels at multiples of $\delta / K$ --- we also conjecture  that it  is admissible among CI calibrators w.r.t.\ to the tightness of CIs produced by e-BY. We think it is likely that the BY calibrator is the only calibrator that allows one to produce CIs that were never worse than BY, making it a good default choice.
\end{remark}}

\section{FCR control of e-BY is sharp}
\label{sec:SharpFCR}

To complement our upper bound on the FCR, we demonstrate the FCR bound is indeed sharp for the e-BY procedure by formulating a setting where the FCR is arbitrarily close to \(\delta\). First, let \(\reals^-\) and \(\reals^+_0\) denote the negative and nonnegative reals, respectively. Now, let
$$
Z^t_i(\theta) =1+ W_i^t  -  \theta  t
~\text{for each }\theta \in \reals^-, ~t \in \reals^+_0, \text{ and }i \in [K],
$$
where $(W_i^t)_{t \in \reals^+_0}$ is a Brownian motion with drift $\theta_i^*$ (unknown to the scientist) and volatility $1$. Define $(E_i^t(\theta)  )_{t \in \reals^+_0}$ as the process obtained by stopping $(Z_i^t)_t$ when it hits $0$:
$$E^t_i (\theta) = Z^t_i(\theta)\ \ind{\inf_{s\in [0,t]} Z^s_i(\theta)>0}.$$

Assume that $(W_{i}^t)$ are independent across $i\in [K]$. Clearly, $(E_i^t(\theta_i^*))$ is a nonnegative martingale (and an e-process) for $\theta_i^*$, since a Brownian motion is a martingale, and stopped martingales are martingales as well.
\chadded{We imagine the scientist to be interested in estimating parameters whose true value $\theta^*_i$ is positive, and she uses the following experimentation setup and selection rule to do so. Fix $\gamma\in [1,1/\delta)$ as the threshold for the processes $(E_i^t(0))$ to be seen as ``interesting'' (and chosen for further experimentation) but not yet conclusive.}

We consider a two-step procedure. In the first step, we stop the $i$-th experiment at time
$$\tau_i \coloneqq \inf\{t\ge 0: W_i^t \ge \gamma - 1 \mbox{ or }  W_i^t   \le  -1\},$$
and we select $S$  such that
$i \in S$ if and only if $W_i^{\tau_i}(0) \geq \gamma - 1$ (i.e., $E_i^{\tau_i}(0) \geq \gamma$).
That is, we select $i$ such that $W_i^t + 1 = E_i^t(0) $ reaches $ \gamma$ (interesting) before it reaches $0$ (discarded). In other words, the set $\selset$ contains all interesting experiments in the first screening.
If $\gamma=1$ then we simply select all experiments.
Let $\beta = K/(\delta |\selset|) >\gamma$. Note that  $\beta $ is a function of $\selset$.
(Here the index $t$ of $W_i^t$ does not necessarily represent time, and it does not have to  synchronize across $i\in [K]$. The scientist can finish all step 1 experiments before moving on with the selected ones for step 2.)

In the second step, define another stopping time for the $i$-th experiment:
$$\eta_i=\inf\{t\ge 0: W_i^t\ge \beta - 1  \mbox{ or }  W_i^t   \le -1\}.$$
That is, we will stop the experiment if either $E_i^t(0) $ reaches a very high level $\beta$ or it is discarded.
Note that $\eta_i>\tau_i$ for $i\in\selset$ and $\eta_i=\tau_i $ for $i\not\in\selset$, meaning that we only continue those experiments that were deemed as interesting in the first step. As a result, $\{E_i^{\eta_i}(\theta)\}_{\theta \in \Theta}$ forms a family of e-values for each $i\in\selset$, and we can define the following one-sided e-CIs:
\begin{align}
    C_i(\alpha) &\coloneqq \left\{\theta \in \reals: E_i^{\eta_i}(\theta) < \frac{1}{\alpha}\right\}\\
    &= \left\{\theta \in \reals: 1 + W_i^{\eta_i} - \theta \eta_i < \frac{1}{\alpha} \text{ or } \inf_{s \in [0, \eta_i]} 1 + W_i^s - \theta s \leq 0\right\}\\
    &  =\left(\min\left(\frac{1 + W_i^{\eta_i} - \frac{1}{\alpha}}{\eta_i}, \inf_{s\in [0, \eta_i]} \frac{1 + W_i^{s}}{s}\right), \infty \right).
    \label{eqn:BrownianCI}
\end{align}

\begin{theorem} Let e-CIs be specified by \eqref{eqn:BrownianCI}. Then, \(\lim\limits_{\vecb{\theta}^*\uparrow \vecb{0}} \FCR = \delta\) , i.e.,\ limit of FCR for the e-BY procedure approaches \(\delta\) as \(\vecb{\theta}^*\) approaches \(\vecb{0}\) (all zero vector) from below.
\label{thm:Sharpness}
\end{theorem}
A full proof of this theorem can be found in \Cref{sec:SharpnessProof}.
\begin{remark}
    The other source of looseness of the FCR bound in \Cref{thm:eBY} is in the relationship between selection event of each parameter and the e-value of the true parameter, i.e., \(E_i(\theta^*_i)\). \chadded{In this example, we can see that the selection rule and the true value of $\mathbf{\theta}^*$ are conflicting in some sense. The scientist wants to select parameters with positive values, so when the parameters are actually all negative values, the CI for a selected parameter will probably not cover the corresponding true value $\theta_i^*$.} We discuss this further in \Cref{sec:Improvement} and observe some small improvements we may make on the e-BY procedure under stronger assumptions about the e-CIs.
\end{remark}
\chadded{\begin{remark}
    While the Brownian motion data generating process in our setup is stylized, it does reflect some realistic aspects of sequential data collection. The early stopping rule that halts as soon as an e-CI shows significance, and the selection of promising experiments for further testing, are both natural strategies for sequential experimentation (e.g., two stage designs, follow up studies). Hence, depending on the relationship between selection rule and the true value of the parameters (as illustrated in our example), it is plausible that the true FCR could be close to the FCR bound.

    On the other hand, the design of the parameters in this example is highly adversarial: $\vecb{\theta}^*$ needs to be very close to 0 from below to approach this upper FCR bound. 
    Therefore, in practice, we do not always expect the FCR of e-BY to be close to its upper bound, but the bound is still unimprovable in general.
    \end{remark}}
\section{Admissibility of the e-BY procedure}
\label{sec:Admissible}
Not only does the e-BY procedure provide sharp error guarantees, it is also admissible w.r.t.\ a general class of e-CI post-selection inference methods. \chadded{We will formally define the universe of \emph{CI reporting procedures} that e-BY belongs to, i.e., procedures which report a CI for the true value of each selected parameter. We will show that, among CI reporting procedures that control the FCR when given e-CIs, there exists no other procedure that produces strictly tighter CIs on selected parameters than e-BY.  To prove this, we define a notion of dominance for FCR controlling CI reporting procedures, and prove that e-BY is indeed admissible among this set of procedures.

Our notion of admissibility essentially requires that there is no FCR controlling procedure that produces narrower CIs on all the selected parameters uniformly over all possible inputs of e-CIs (by narrower, we mean at least as narrow on all instances, but strictly narrower on at least one instance).
To develop our formalisms, we will first describe a notion of an e-CI constructor, i.e., a function that maps an error level $\alpha$ to its corresponding $(1 - \alpha)$-e-CI.}

\subsection{Key properties of all e-CIs}
Recall that the $(1 - \alpha)$-e-CI \textit{associated} with a family of e-values $\{E(\theta)\}_{\theta \in \Theta}$ is given by
\begin{equation}\label{eq:associated}
    C(\alpha) = \left\{\theta \in \Theta : E(\theta) < \frac{1}{\alpha}\right\},~ \text{ for each } \alpha \in [0,1],
\end{equation}
where the convention $1/0=\infty$ is used. Note that it is important to formulate the e-CI by using the strict inequality $E(\theta) < 1 / \alpha$
instead of $E(\theta) \le  1 / \alpha$.
Otherwise all admissibility arguments fail, since one could always improve from $ \{\theta \in \Theta : E(\theta)  \le1 / \alpha\} $ to $ \{\theta \in \Theta : E(\theta) < 1 / \alpha\} $, as smaller CIs are statistically stronger.
\chadded{We define an e-CI constructor as the collection of e-CIs for each error level $\alpha \in [0, 1]$.
\begin{definition} Let $\Erm \coloneqq \{E(\theta)\}_{\theta \in \Theta}$ be a family of e-values for a parameter that takes values in $\Theta$. The corresponding \emph{e-CI constructor} is the function $C^{\Erm}: [0, 1] \rightarrow 2^\Theta$ which is defined as:
\begin{align}
    C^\Erm(\alpha) = \left\{\theta \in \Theta : E(\theta) < \frac{1}{\alpha}\right\},~ \text{ for each } \alpha \in [0,1].
\end{align}
\end{definition}

It is important to note that  $C^\Erm$ we define above is a \emph{random function}. We have not separated designations for random variables or functions and their realizations in this paper before, but we emphasize their distinction in this section. This distinction allows us to define the domain of realizations of e-CI, which in turn, is used to the notion of a CI reporting procedure.

Denote by the set $\ECI(\Theta)$ the set of possible realizations of e-CI constructors for the parameter space $\Theta$, that is, $C \in \ECI(\Theta)$ iff  $C$ is a possible realization of an e-CI constructor $C^\Erm$ for some family of e-values $\mathrm{E} = \{E(\theta)\}_\theta$ via \eqref{eq:associated} under some probability $\pdist \in \Pcal$.} We abuse vocabulary in this section and refer to the e-CI constructor, $C$, as an ``e-CI'' as well.
$\mathrm{ECI}(\Theta)$ captures the domain of allowed input of realized e-CIs, similar to  $[0,1]^K$ for input p-values and $[0,\infty]^K$ for input e-values in multiple testing.
It is important to formally describe $\mathrm{ECI}(\Theta)$ as it will be used as part of the domain of CI reporting procedures that we discuss later.

\begin{lemma}\label{lemma:ECINonincreasing}
For a set function $C:[0,1]\mapsto 2^\Theta$,
$C\in \mathrm{ECI}(\Theta)$ if and only if $C$ is nonincreasing and continuous from below.
\end{lemma}

We provide the proof in \Cref{sec:ProofECINonincreasing}. We see from Lemma \ref{lemma:ECINonincreasing}  that $\mathrm{ECI}(\Theta)$ is precisely the set functions from $[0,1]$ to $2^\Theta$ which are decreasing and continuous from below; note that these properties are common for any CI. In other words,
for a given CI  which may be computed with other methods than e-values,
we cannot exclude the possibility that it is an e-CI just by looking at the CI.

 \begin{lemma}\label{lemma:ECIEvalue}
For a given $C\in \mathrm{ECI}(\Theta)$,
it is realized by the family of e-values $\{E(\theta)\}_\theta$ if and only if $E(\theta)$ takes the value
$
E(\theta)=t(\theta)
$ whenever $E(\theta)\ge 1$ for all $\theta \in \Theta$,
where $t$ is given in \eqref{eq:deft}.
 \end{lemma}
This is a direct consequence of \eqref{eq:associated} and \eqref{eq:equi-t}.
Due to Lemma \ref{lemma:ECIEvalue}, for any family of e-values for $\Theta$ and  $\theta\in \Theta$, the probability of realizing $C\in \mathrm{ECI}(\Theta)$ is at most $1/t(\theta)$ for  $t$ given in \eqref{eq:deft}.
\subsection{CI reporting procedures}
\label{sec:CIreport}

Next, fix  $K$ parameter spaces $\mathbf{\Theta} \coloneqq( \Theta_1,\dots,  \Theta_K)$ of interest.
$\mathrm{ECI}_K$ denotes the set of all $K$-tuples of realizations of e-CIs, $\mathbf C=(C_i)_{i\in [K]}$, for the true values of these parameters (we omit the parameter spaces in $\mathrm{ECI}_K$ since they are fixed from now on). \chadded{Define $\mathbf{2}^{\Theta} \coloneqq \prod_{i = 1}^K 2^{\Theta_i}$ as the product space of the power set of each parameter's range. Using Lemma \ref{lemma:ECINonincreasing},  $\mathrm{ECI}_K$ is the collection of  $\mathbf C:[0,1]\to \mathbf{2}^{\Theta}$
whose components are nonincreasing and continuous from below. Now, we define a notion of a CI reporting procedure.}
\chadded{{
\begin{definition}
    \chadded{A \emph{CI reporting procedure} $\Phi: [K] \times ([0, 1]\rightarrow \mathbf{2}^{\Theta})  \rightarrow \mathbf{2}^{\Theta}$ takes a selected set of indices $S \subseteq [K]$ and the collection of e-CIs, $\mathbf C =(C_1,\dots,C_K)  \in\mathrm{ECI}_K$, as input  and it outputs a vector of intervals, $\Phi(S,\mathbf C) \in \mathbf{2}^\Theta$, satisfying}
 \begin{enumerate}[label=(\alph*)]
     \item \label{item:EmptyUnselected} \chadded{$\Phi(S, \mathbf C)_i = \emptyset$ for all $i \not \in S$.}
\item  \label{item:SelectionOnly} \chadded{$\Phi(S,\mathbf C)   =  \Phi(S,\mathbf C')
 $
if $\mathbf C_S=\mathbf C'_S$
where  $\mathbf C_S= (  C_i)_{i \in S} $; that is,   for a selected set of indices $S$, if two input vectors of CIs are identical for  indices in $S$, then  $\Phi$  does not distinguish them.}
\end{enumerate}
\end{definition}}}
\chadded{Restriction~\ref{item:EmptyUnselected} is a simplifying assumption since the purpose of CI reporting procedures is to provide CIs for parameters in the selected set $S$. Hence, we just report the empty set for unselected parameters.
The above restriction (b) does not seem to be dispensable from the proof  of admissibility which we provide later (see Remark \ref{rem:morediscussion} in \Cref{sec:AdmissibilityProof}).
It is a reasonable assumption: all the CIs that are discarded or uninteresting should not affect how we report the selected CIs.
Both the e-BY and the weighted version of the e-BY procedure (which we introduce and describe in \Cref{sec:WeightedEBY}) satisfy this requirement.

In a statistical experiment, let $\Erm_i \coloneqq  \{E_i(\theta)\}_{\theta \in \Theta_i}$ denote the family of e-values used to construct e-CIs for $\theta_i^*$. $\mathbf{C}$ are realizations of the e-CIs corresponding to the
families of e-values $\mathbf{E} \coloneqq (\Erm_i)_{i \in [K]}$  
via
\begin{align}
\label{eq:associated-K}
C^{\mathbf E}_i(\alpha) \coloneqq C^{\Erm_i}(\alpha) = \left\{\theta \in \Theta_i: E_i(\theta)  < \frac{1}{\alpha}\right\} \text{ for each }\alpha \in [0,1],~i\in [K].
\end{align}}

Let $\mathbf{C}^{\mathbf{E}} \coloneqq (C_1^{\mathbf{E}}, \dots, C_K^{\mathbf{E}})$ denote the random vector of e-CIs that arise from $\mathbf{E}$.
Recall that a CI reporting procedure only has access to the realized selected set and the e-CIs, i.e., $S$ and $\mathbf C$, respectively but not the random e-CIs $\mathbf{C}^{\mathbf{E}}$ themselves.
The FCR of the CI reporting procedure $\Phi$ for the selection $\mathcal S$,  vector of e-value families $\mathbf E$, and true parameters $\theta_1^*\in\Theta_1,\dots,\theta_K^*\in\Theta_K$
is given by
\begin{align}
\mathrm{FCR}(\Phi)  & = \expect\left[\frac{\sum_{i\in \mathcal S} \ind{\theta_i^* \not \in \Phi(\mathcal S,\mathbf C^\mathbf E)_i }}{|\mathcal S| \vee 1}\right],
\label{eq:FCRdef}
\end{align}
where the expectation is taken under the true distribution $\pdist^*$.

A CI reporting procedure $\Phi$ has FCR  level $\delta\in [0,1]$ if $\mathrm{FCR}(\Phi)\le \delta$ for any selection $\mathcal S$, e-value families $\mathbf{E}$, and true values of parameters $\boldsymbol \theta^*$.

\begin{definition}[Dominance]
We say that the  CI reporting procedure $\Phi$ \emph{dominates} another one $\Phi'$
\chadded{ if $\Phi(S,\mathbf C)_i \subseteq \Phi'( S,\mathbf C)_i$ for each $i \in S$, i.e., $\Phi$ produces narrower CIs for all parameters in $S$ than the CIs produced by $\Phi'$, for all $S\subseteq [K]$ and $\mathbf C\in \mathrm{ECI}_K$,
and \emph{strictly dominates} if further there exists $i \in S$ s.t.\ $\Phi( S,\mathbf C)_i \subsetneq \Phi'( S,\mathbf C)_i$ for some}
 $S\subseteq [K]$ and   $\mathbf C\in \mathrm{ECI}^{*}_K$ where $\mathrm{ECI}^{*}_K$ is the  set of all $\mathbf C\in \mathrm{ECI}_K$ with strictly decreasing components.
 \end{definition}

Now, that we have a notion of dominance between e-CI reporting procedures, we can define admissibility.

\begin{definition}[Admissible CI reporting procedure]
A CI reporting procedure with FCR level~$\delta\in (0,1)$ is \emph{admissible} if it is not strictly dominated by any CI reporting procedure with FCR level $\delta$.
\end{definition}

\begin{theorem}\label{th:admissible}
The e-BY procedure at level $\delta\in (0,1)$ is an admissible CI reporting procedure with FCR level $\delta$.
\end{theorem}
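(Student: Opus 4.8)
The plan is to dispatch the FCR guarantee immediately and then concentrate on admissibility. The e-BY procedure is a CI reporting procedure in the sense of \Cref{sec:CIreport}: it outputs a rectangular set and depends on $\mathbf C$ only through $\mathbf C_S$ (it reports $C_i(\delta|S|/K)$ for $i\in S$, and $|S|$ is determined by $S$), so it satisfies (a) and restriction~\ref{item:SelectionOnly}; \Cref{thm:eBY} then gives it FCR level $\delta$. For admissibility I would argue contrapositively: let $\Phi$ be any CI reporting procedure with FCR level $\delta$ that dominates e-BY, and show $\Phi$ cannot dominate it strictly, i.e.\ $\Phi$ agrees with e-BY on $\mathrm{ECI}^{*}_K$. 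The one general fact I would record first is a pointwise monotonicity of FCR: since $\Phi(S,\mathbf C)_i\subseteq C_i(\delta|S|/K)$ for every input, on every statistical instance each miscoverage indicator $\ind{\theta_i\notin\Phi(\mathcal S,\mathbf C^{\mathbf E})_i}$ dominates the corresponding e-BY indicator, whence $\mathrm{FCR}(\Phi)\ge\mathrm{FCR}(\text{e-BY})$ on that very instance.

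Next I set up the contradiction. Suppose the domination is strict, witnessed by some $S^{*}\subseteq[K]$ and $\mathbf C^{*}\in\mathrm{ECI}^{*}_K$ with a coordinate $i_0\in S^{*}$ and a parameter $\theta^{*}\in C^{*}_{i_0}(\delta s/K)\setminus\Phi(S^{*},\mathbf C^{*})_{i_0}$, where $s=|S^{*}|$. Writing $t_{i_0}$ for the threshold of $C^{*}_{i_0}$ defined in \eqref{eq:deft}, the membership $\theta^{*}\in C^{*}_{i_0}(\delta s/K)$ is exactly $t_{i_0}(\theta^{*})<K/(\delta s)$, equivalently $1/t_{i_0}(\theta^{*})>\delta s/K$. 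The goal is to exhibit a single instance---a selection $\mathcal S$, a vector of e-values, and true parameters with $\theta_{i_0}=\theta^{*}$---on which $\mathrm{FCR}(\Phi)>\delta$, contradicting the assumed FCR level.

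The construction would have two ingredients. The first drives e-BY to its worst case: I would reuse the two-step, Ville-tight mechanism of \Cref{thm:Sharpness} on the coordinates of $S^{*}$ (with the second-stage threshold $\beta=K/(\delta|\mathcal S|)$ and drifts $\uparrow 0$) so that $\mathrm{FCR}(\text{e-BY})$ can be pushed arbitrarily close to $\delta$ while the selected set realizes as $S^{*}$. The second ingredient forces extra miscoverage at $i_0$: using the canonical realization of \Cref{lem:1} and the characterization in \Cref{lem:2}, I would realize $C^{*}_{i_0}$ from an e-value that attains $E_{i_0}(\theta^{*})=t_{i_0}(\theta^{*})$ on a realization event of probability up to $1/t_{i_0}(\theta^{*})>\delta s/K$. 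On that event e-BY still covers $\theta^{*}$ (because $t_{i_0}(\theta^{*})<K/(\delta s)$ puts $\theta^{*}$ inside $C^{*}_{i_0}(\delta s/K)$), whereas $\Phi$---seeing exactly $\mathbf C^{*}_{S^{*}}$ and invoking restriction~\ref{item:SelectionOnly}---reports $\Phi(S^{*},\mathbf C^{*})_{i_0}\not\ni\theta^{*}$ and miscovers. Since the realization probability strictly exceeds the e-BY error level $\delta s/K$, adding this contribution to an e-BY FCR already near $\delta$ gives $\mathrm{FCR}(\Phi)>\delta$.

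The hard part will be realizing the witness $\mathbf C^{*}$ \emph{exactly}, so as to trigger $\Phi$'s improvement through restriction~\ref{item:SelectionOnly}, on an event of positive probability. Because $\mathbf C^{*}\in\mathrm{ECI}^{*}_K$ has strictly decreasing components down to $\alpha=0$, its threshold $t_{i_0}$ is necessarily unbounded, and the bound behind \Cref{lem:2} shows that under a single shared randomizer a valid e-value realizes such a $C^{*}_{i_0}$ exactly only with probability at most $1/\sup_\theta t_{i_0}(\theta)=0$; exact realization genuinely has probability zero for the binding models. I therefore expect the decisive step to be a limiting argument: realize truncated e-CIs that coincide with $C^{*}_{i_0}$ down to a level $1/M$ (hence have bounded threshold and are exactly realizable with positive probability), let $M\to\infty$ jointly with $\boldsymbol\theta\uparrow\mathbf 0$ as in \Cref{thm:Sharpness}, and use monotone convergence to transport the strictly positive gap to the limit. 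This is precisely where the strict-monotonicity hypothesis on $\mathbf C^{*}$ and restriction~\ref{item:SelectionOnly} are indispensable---the same features flagged in Remark~\ref{rem:improve}---and controlling the possibly discontinuous behavior of $\Phi$ along this limit is the crux of the whole argument.
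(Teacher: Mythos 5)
Your first paragraph (e-BY is a CI reporting procedure and \Cref{thm:eBY} gives it \(\FCR\) level $\delta$) is fine, and your contradiction skeleton—fix a witness $(S^{*},\mathbf C^{*},i_0)$ with $\Phi(S^{*},\mathbf C^{*})_{i_0}\subsetneq C^{*}_{i_0}(\delta|S^{*}|/K)$ and build an instance with $\mathrm{FCR}(\Phi)>\delta$—matches the paper. The fatal gap is the claim you build everything else around: that exact realization of the witness has probability zero. This misreads the consequence of \Cref{lem:2}. The bound ``probability of realizing $C$ is at most $1/t(\theta)$'' holds, for each $\theta$, under a distribution whose parameter \emph{is} $\theta$, because the e-value property $\expect_\theta[E(\theta)]\le 1$ (and hence Markov) is only available under such a distribution; for $\theta'\neq\theta$ the variable $E(\theta')$ may have arbitrarily large mean under that same distribution. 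These bounds live under different measures, so they cannot be intersected into a single bound $1/\sup_{\theta}t_{i_0}(\theta)=0$ under the one true measure. Under the true distribution the only binding constraint is $1/t_i(\theta_i)$ at the \emph{true} parameters, and the paper chooses them so that these thresholds are finite: $\theta_i\in C_i((1-\epsilon)\delta|S|/K)\setminus C_i(\delta|S|/K)$ for $i\in S\setminus\{i^{*}\}$, and $\theta_{i^{*}}\in C_{i^{*}}(\delta|S|/K)\setminus\Phi(S,\mathbf C)_{i^{*}}$. It then builds e-values from a shared randomizer $U_S$ whose law is uniform under every parameter vector except the designated true one, where it has an atom of mass $1/t_S$ at $0$; on that atom, \eqref{eq:alleffort} gives $E_i(\theta_i)=t_i(\theta_i)$ for all $i\in S$ and $\mathbf C^{\mathbf E}_S=\mathbf C_S$ \emph{exactly}, an event of probability $1/t_S\approx\delta|S|/K>0$. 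Exact realization is not a null event—it is the engine of the proof.

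Because that premise is false, your two workarounds do not rescue the argument. The truncation-plus-limit scheme fails for precisely the reason you flag as the crux: $\Phi$'s strict improvement is hypothesized only at the exact input $(S^{*},\mathbf C^{*})$, and an adversarial $\Phi$ may coincide with e-BY at every truncated input $\mathbf C^{*,M}$, so no positive gap exists at any finite $M$ and there is nothing for monotone convergence to transport. Grafting on the Brownian construction of \Cref{thm:Sharpness} is also incompatible with triggering restriction~\ref{item:SelectionOnly}: the e-values there realize e-CIs of the specific form \eqref{eqn:BrownianCI}, never the witness $\mathbf C^{*}_{S^{*}}$, whereas $\Phi$ miscovers more than e-BY only on the event where \emph{both} $\mathcal S=S^{*}$ \emph{and} the realized e-CIs equal the witness on $S^{*}$. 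The paper's construction handles all of this abstractly and simultaneously: $\mathcal S$ is randomized over $\{S,[K]\setminus S\}$ with probabilities $|S|/K$ and $|T|/K$, so the unselected coordinates also contribute miscoverage and the baseline \(\FCR\) tends to $\delta$ as $\epsilon\downarrow 0$, while $E_{i^{*}}$ carries an extra ``boost'' of mass $\ell_S/t_S$ that converts $\Phi$'s strict improvement at $i^{*}$ into $\mathrm{FCR}(\Phi)>\delta$ in the limit. Without the positive-probability exact-realization step, your instance never forces $\Phi$ to deviate from e-BY at all, and no contradiction is obtained.
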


We defer the proof of this theorem to \Cref{sec:AdmissibilityProof}. Further, using the same arguments as in the proof of \Cref{th:admissible}, we can also show that a weighted version of the e-BY procedure, defined in \Cref{sec:WeightedEBY}, is also admissible.

\section{Numerical simulations}
\label{sec:Simulations}
To understand the practical difference in the precision of the BY procedure and the e-BY procedure, we conducted simulations in two different data settings. The first is a nonparametric setting, where we only make the assumption that the data is bounded. In the second setting, we simulate the sharp FCR scenario for e-BY described in \Cref{sec:SharpFCR}, where the data are stopped Brownian motions.

\subsection{Bounded setting}
\label{sec:BoundedSimulations}

We wish to estimate \(K\) different means in this setting. For each \(i \in [K]\), let \(X_i \in [-1, 1]\) be a bounded random variable of interest, and let \(\mathbf{X}_i = (X_i^1, \dots, X_i^n)\) represent \(n\) i.i.d.\ draws from the distribution of \(X_i\). Thus, the \(i\)th parameter of interest is \(\theta_i^* = \expect[X_i]\). The distribution of \(X_i\) is standard normal distribution truncated to \([-\sigma, \sigma]\) that is normalized to be supported on \([-1, 1]\).
We set \(\sigma = 100\).

Let our desired level of FCR control be \(\delta = 0.1\). The selection rule selects all parameters with p-values less than \(\delta\). Let \(\widehat{\theta}_i\) denote the sample mean of \(X_i^1, \dots, X_i^n\) for each \(i \in [K]\). Our p-value of choice for the \(i\)th parameter is \chadded{\(P_i = (2 \exp(-\widehat{\theta}_i^2/(2n))) \wedge 1\)}. \chadded{Such $P_i$ is a two-sided p-value for bounded random variables derived from Hoeffding's inequality that tests the null hypothesis $H_i: \theta_i^* = 0$ for each $i \in [K]$.}

\begin{figure}[ht]
    \centering
    \includegraphics[width=\textwidth]{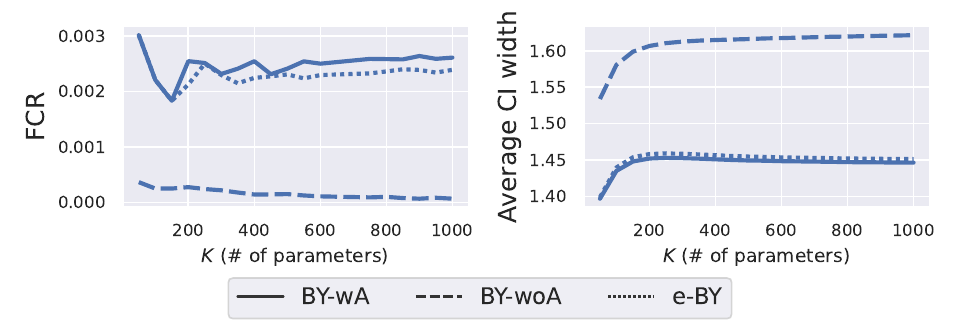}
    \includegraphics[trim={0 1cm 0 0},clip,width=0.5\textwidth]{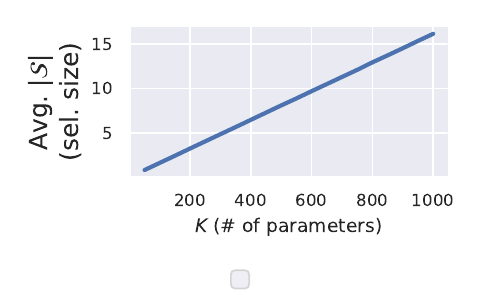}

    \caption{Comparison of the FCR and average CI width for the BY vs.\ the e-BY procedure in the bounded setting. \chadded{We also plot the average number of parameters selected (i.e., $|\selset|$) for each $K$.} ``BY-wA'' refers to the BY procedure under the assumptions that the CIs are independent (or PRDS) and  \(R_{i}^{\min} = |\selset|\), while ``BY-woA'' is with no assumptions. The average CI width of the e-BY procedure is nearly the same as the BY procedure with assumptions, and both are vastly tighter than BY with no assumptions.
    }
    \label{fig:Simulation}
\end{figure}

The CI for the BY procedure, and the e-CI for the e-BY procedure as defined as follows:
\begin{align}
C_i(\alpha) &\coloneqq \left(\widehat{\theta}_i \pm \sqrt{\frac{2\log(2 / \alpha)}{n}}\right),\\C_i(\alpha) &\coloneqq \left(\widehat{\theta}_i \pm \sqrt{\frac{2\log(2 / \alpha)}{n}} \cdot \frac{\log(2 / \alpha) + \log(4 / \delta)}{2\sqrt{\log(2 / \alpha)\log(4 / \delta)}}\right).
\end{align}
These two CIs are equivalent when \(\alpha = \delta / 2\) --- both are derived from Hoeffding's inequality. We compare three different post-selection inference methods: the e-BY procedure, the BY procedure with both an independence and \(R_i^{\min} = |\selset|\) assumption (which is satisfied in this setting), and the BY procedure without any assumptions. We refer to the BY procedure with the assumptions as ``BY-wA'', and the BY procedure without assumptions as ``BY-woA''.

\paragraph{Results} In Figure~\ref{fig:Simulation}, we see that e-BY and BY-wA have nearly the same expected width. On the other hand, BY-woA has a much larger expected width since it is uniformly dominated by BY-wA. This is a setting where one should use the BY-wA procedure for post-selection inference, since there is no dependence.
However, if the selection rule or dependence is unknown beforehand, one can safely use the e-BY procedure, and attain CIs that are nearly as tight as the ones obtained through BY-wA.

\subsection{Stopped Brownian motion setting}

We also simulate the sharp FCR setting from \Cref{sec:SharpFCR} for different true drift parameters, $\theta_i^* = \theta^*$ for all \(i \in [K]\), and number of parameters, $K$ to compare the e-BY and BY procedures under a setting where the dependence structure and selection rule is more complex. In this setting, the assumptions of the BY procedure are not satisfied --- the \(X_i\) are neither independent nor PRDS across \(i \in [K]\). Hence, BY-wA does not have guaranteed FCR control in this setting. We compare the procedures on choices of \(K \in \{10, 30, 100, 300, 1000\}\) and \(\theta^* \in \{10^{-1}, 10^{-2}, 10^{-3}\}\). We set \(\delta=0.05\) and \(\gamma=2\). The Brownian motion processes, \((W_i^t)\) for each \(i \in [K]\), are discretized into time steps of size \(10^{-5}\).
We use the e-CI formulated in \eqref{eqn:BrownianCI} for the e-BY procedure and its running intersection and for the BY procedure.
\begin{figure}[h]
    \centering
    \includegraphics[width=\textwidth]{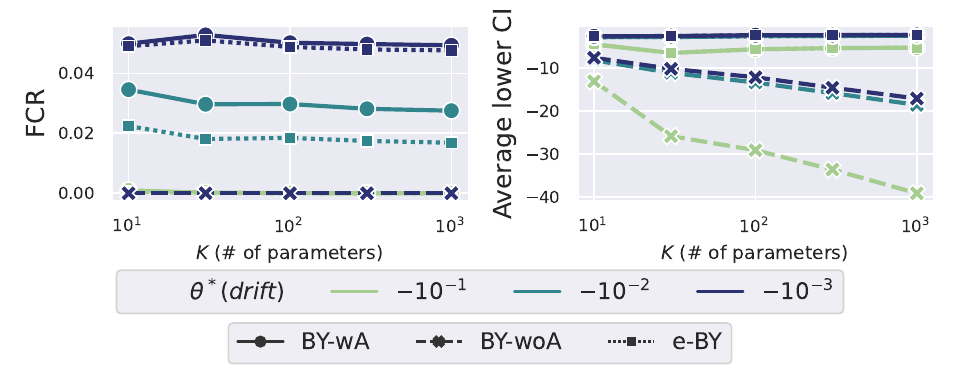}
    \includegraphics[trim={0 1cm 0 0},clip,width=0.5\textwidth]{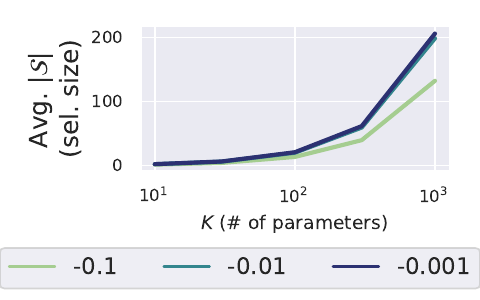}

    \caption{Comparison of the e-BY procedure vs.\ the BY procedure for the stopped Brownian motion setting. In this setting, all CIs produced are one-sided, and \(\theta^* < 0\), so we desire the average lower boundary to be as large as possible. The e-BY procedure and the BY-wA procedure have tight FCR as \(\theta^* \uparrow 0\) and vastly higher average lower boundaries than the the BY-woA procedure. BY-woA has extremely conservative (nearly 0) FCR control.}
    \label{fig:StoppedBrownianMotion}
\end{figure}

\paragraph{Results} In \Cref{fig:StoppedBrownianMotion}, we plot the empirical FCR and average CI lower bound (since the CIs are all one-sided) of each method for different values of the true parameter \(\theta^*\). Although it has no theoretical guarantees, BY-wA has an empirical FCR that is under \(\delta\) for all experiment parameters. The e-BY procedure, which does have an FCR guarantee, also has FCR control, and has an average lower CI endpoint that is nearly as large as BY-wA. On the other hand, BY-woA has an extremely conservative empirical FCR and produces much smaller average lower CI endpoint than the other two methods. This reflects the best-of-both worlds behavior of the e-BY procedure in practice --- e-BY has provable FCR control and CI widths comparable to the tightest possible widths produced by BY.

Thus, the e-BY procedure provides FCR control and CIs competitive with the BY procedure while requiring minimal assumptions. We will next show that this is also the case when using the e-BY procedure on real world data from A/B testing.

\section{Application: decision making in A/B testing}
\label{sec:ABTesting}

One natural application of post-selection inference is in A/B testing --- A/B testing is the use of randomized control trials to decide which features (among many) have a positive impact on users and should be shipped with the product. This method is typically used by information technology companies to determine whether a new version of a software product improves over the current version on certain user metrics.
Each A/B test is run in a sequential fashion, where users are continually being added to the experiment over time. The goal of an A/B test is to stop as quickly as possible and allow the data scientist to make a confident decision about which version of software to ship based on estimates of the population mean of the user metrics.
The e-BY procedure fits the A/B testing setup well for two key reasons: (1) the measurements of different metrics have a complex dependence structure since multiple measurements are made of a single user, and (2) the setup of the A/B testing is already sequential, so e-CIs, specifically stopped CSes, should already be the default for inference on each metric.

To simulate the behavior of data scientists choosing metrics to justify shipping decisions, we derive a selection method from the criteria for justifying shipping decisions of a single team within Twitter, a large information technology company. The shipping criteria provides guidelines for which combinations of metrics for which the difference between the treatment and control versions need to be statistically significant in a favorable direction, and which metrics that should not be significant in an unfavorable direction. We implemented a simplified version of this shipping criteria to be our selection method. Hence, our selection rule is the same across both procedures, and we are simply testing how the choice of post-selection inference procedure affects when the shipping criteria is considered satisfied.

\paragraph{A/B testing dataset from Twitter.} We compare the e-BY procedure vs.\ the BY procedure on a a dataset of actual A/B testing experiments from Twitter in a 1.5 year period and ran for at least two weeks. There are a total of 263 experiments in this dataset. Each of these experiments kept track of all 15 metrics ($K = 30$ since we treating the control and treatment versions of each metric as separate parameters) that were necessary for the shipping criteria at the daily level.

 Since the user data is collected sequentially in these experiments, we use the following CS from \citet{waudby-smith_time-uniform_central_2022} for the e-BY procedure:
\begin{align}
	C_t^i(\alpha) = \left(\widehat{\mu}_i^t\ \pm 1.7\sqrt{\widehat{\sigma}^2_{i, t} \cdot \frac{\log\log(2t) + 0.72\log(5.2 / \alpha)}{t}}\right),
\end{align} where \(\widehat{\mu}_i^t\) and \( \widehat{\sigma}^2_{i, t}\) are  the empirical mean and variance, respectively, of \(i\)th metric for the first \(t\) users.
This is an asymptotic CS , i.e.,\ the boundaries approach that of a true CS as the sample size increases under similar conditions as the central limit theorem for a traditional fixed-time asymptotic CI \citep{waudby-smith_time-uniform_central_2022}. Since the sample size in these experiments are on the order of $10^6$ users even on the first day, this asymptotic CS is very close to the true CS boundary.
We use \(\bar{C}_t^i = \cap_{i = 1}^t C_t^i\) , i.e.,\ the intersection of all intervals so far for the BY procedure. \begin{figure}[h]
    \centering
    \includegraphics[width=\textwidth]{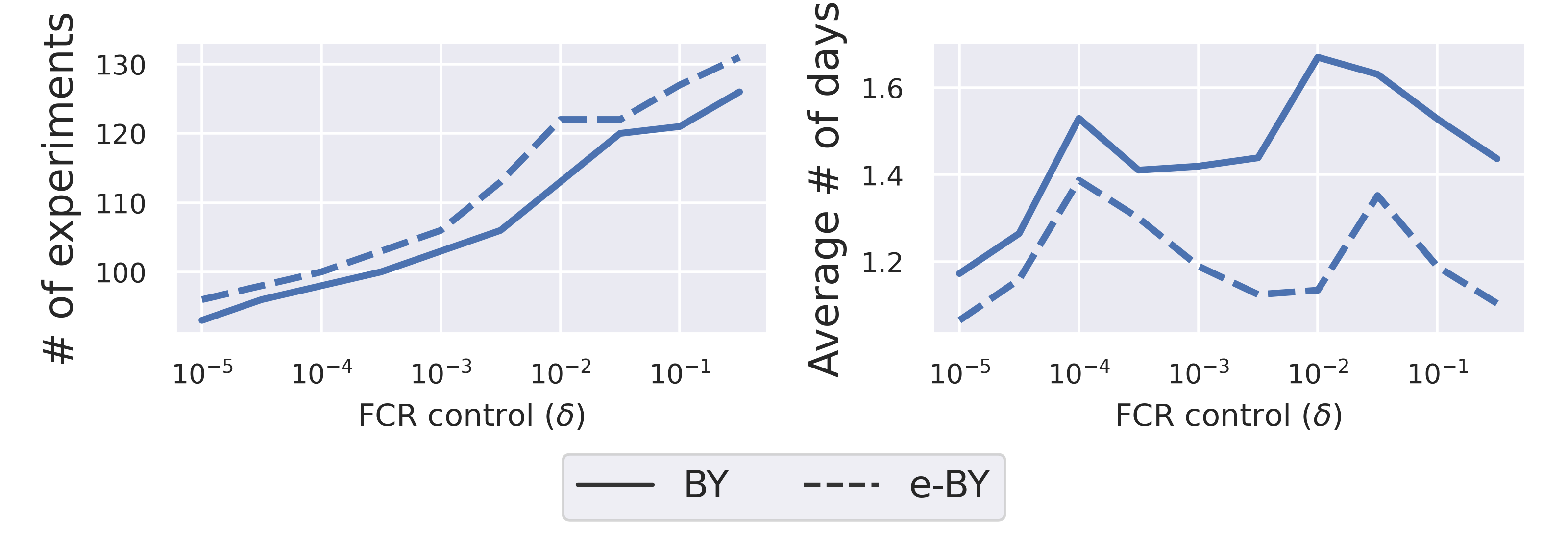}
    \caption{Results of the e-BY procedure vs.\ the BY procedure on the real Twitter A/B testing data across different levels of \(\delta \in [0, 1]\) for each procedure. The left figure depicts the total number of experiments where each procedure satisfied shipping criterion. The right figure depicts the average number of days an experiment ran before the procedure satisfied the shipping criteria. The set of experiments where the e-BY procedure satisfies shipping criteria is a strict superset of the set where the BY procedure satisfies shipping criteria (since e-BY dominates BY under dependence). Hence, the average number of days to satisfying the criteria is calculated over only the experiments for which the BY (and e-BY) procedure satisfied the shipping criteria.
    .}
    \label{fig:Results}
\end{figure}
\paragraph{Results.} The results of our analysis in \Cref{fig:Results} show that the e-BY procedure is better than the BY procedure in both number of experiments where a shipping decision can be justified, and the average time within each experiment to have sufficient evidence to satisfy the shipping criteria. For example, for a reasonable level of FCR control of \(\delta=0.1\),  the e-BY procedure justified shipping decisions for 127 experiments --- 5 more than 122 for the BY procedure. Note that the set of experiments where shipping was justified by the e-BY procedure is a superset of the experiments that the BY procedure justified shipping for. Consequently, we can also compare the average number of days before a shipping decision is justified for the e-BY and BY procedures, by taking the average over the experiments that the BY procedure justified shipping decisions on. We see that the the e-BY procedure took 1.2 days on average to satisfy the shipping criteria --- 0.3 days less than the BY procedure which took on average 1.5 days. Clearly, the tightness of the e-BY procedure makes a practical impact on the efficiency of A/B testing.

\section{Conclusion}

We have shown that the e-BY procedure is a versatile method for controlling FCR in post-selection inference and improves inference in a broad class of settings. While e-BY is restricted use only on e-CIs, universal inference and stopped confidence sequences~\citep{wasserman_universal_inference_2020,howard2020time,howard2021time} are already e-CIs. Further, the e-BY procedure maintains FCR control with no assumptions on the dependence structure or selection rule, and produces CIs with widths that are never larger (and usually smaller) than those of the BY procedure. In addition, we demonstrated that the e-BY procedure is optimal for e-CIs in some sense, as the FCR bound it guarantees is sharp, and that it is admissible in the domain of e-CI reporting procedures.

With respect to the utility of e-CIs, we observe  that most of the tightest CIs we can construct in nonparametric and sequential settings are already e-CIs. In the sequential setting, we showed that the e-BY procedure produces practically useful CIs for justifying shipping decisions on data from real A/B tests.
In addition, we discuss how to construct nontrivial e-CIs in any settings where CIs exist in \Cref{sec:Calibration}: we extend the p-to-e calibration methods introduced by \citet{vovk_logic_probability_1993b} to calibrating arbitrary CIs into e-CIs.

In this vein, an important direction for future study is to understand in what settings are the effectively tightest CIs also e-CIs. In many settings (e.g.\ Gaussian) where the exact (often asymptotic) distribution of a statistic under each parameter is known and identical across parameters, the tightest CIs (e.g.\ CI based on Gaussian CDF) are not e-CIs. On the other hand, e-CIs are often the only kind of CI we can construct when we are testing complicated composite nulls. Particularly in sequential settings where we desire an anytime-valid e-CI, \citet{ramdas_admissible_2020} have shown that e-CIs are nearly admissible. Hence, further study of e-CIs would improve the utility of the e-BY procedure.
 
\paragraph{Acknowledgements} We would like to thank Umashanthi Pavalanathan and Luke Sonnet for insightful discussions about the application of post-selection inference to A/B testing and their help in gathering and releasing the data from Twitter used in our experiments.
 \bibliography{eby}
\appendix
\section*{Appendix}
The appendix is organized as follows. We provide all the proofs we had omitted in the body of the main paper in \Cref{sec:Proofs}, in order of their mention in the paper. \Cref{sec:MultipleTesting} describes explicit connections between the post-selection inference procedures described in this paper for FCR control with FDR controlling procedures for multiple testing. We then present a weighted version of the e-BY procedure in \Cref{sec:WeightedEBY}. To complement our previous discussion of admissibility, we also describe an improvement we can make to the e-BY procedure in \Cref{sec:Improvement} if we discard some of the restrictions made on CI-reporting procedures. We highlight the difference between conventional Chernoff based CIs and Chernoff e-CIs in \Cref{sec:BatchChernoff} by introducing a notion of generalized e-CIs --- this notion was applied in our simulations in the bounded setting in \Cref{sec:BoundedSimulations}. Lastly, we discuss the advantages of FCR and how FCR relates to other error metrics that could be considered for the post-selection inference problem in \Cref{sec:CoverageTypes}.
\section{Proofs}
\label{sec:Proofs}

We produce the proofs that we had omitted in the main body of the paper in this section.

\subsection{Proof of \texorpdfstring{\Cref{thm:Calibration}}{calibration theorem}}
\label{sec:CalibrationProof}

We first note that \Cref{def:Calibrator} implies that \(f(P)\) is an e-value for any calibrator \(f\) and p-value \(P\) --- even if \(P\) is uniformly distributed.

To connect this idea with CIs, observe that the ``dual'' form of a CI is a p-value --- a CI represents a collection of hypothesis tests for a fixed type I error (i.e.,\ confidence level) across a set of hypotheses. In essence, it is the set of hypotheses that would not be rejected at some fixed level \(\alpha\) with the current realization of the sampled data. In contrast, a p-value represents a collection of hypothesis tests for a fixed hypothesis, but across different rejection levels \(\alpha\). Here, the p-value itself is the type I error of the test with the smallest rejection level that rejects the fixed null hypothesis.
\begin{definition}
    A \emph{p-value} \(P\) for a set of distributions \(\mathcal{Q}\) is a random variable supported on $[0, 1]$ that satisfies the following:
\begin{align}
    \sup_{\pdist \in \mathcal{Q}}\ \mathbb{P}_{\pdist}(P \leq \alpha) \leq \alpha \text{ for all } \alpha \in [0, 1].
\end{align}
\label{def:pvalue}
\end{definition}

Thus, we can calibrate the implicit p-value associated with every CI constructing procedure to an e-value, and then produce the e-CI associated with the e-value to calibrate from a CI to an e-CI.

Let \(C(\alpha)\) produce a \((1 - \alpha)\) CI for any \(\alpha \in [0, 1]\). Consequently, for a fixed parameter \(\theta \in \Theta\), the following is a p-value for the set \(\mathcal{Q}_\theta \coloneqq \{\pdist \in \Pcal: T(\pdist) = \theta\}\):
\begin{align}
    P^{\mathrm{dual}}(\theta) \coloneqq \inf \left\{\alpha \in [0, 1]: \theta \not \in C(\alpha)\right\}.
\end{align}

Consequently, \[E^{\mathrm{cal}}(\theta) \coloneqq f(P^{\mathrm{dual}}(\theta))\] forms a family of e-values. Hence, \[C^{\mathrm{cal}}(\alpha) \coloneqq \left\{\theta \in \Theta: E^{\mathrm{cal}}(\theta) < \frac{1}{\alpha}\right\}\] is a \((1 - \alpha)\)-e-CI. To show the equality between \(C^{\mathrm{cal}}\) and \(C\), we make the following observations.
\begin{align}
    C^{\mathrm{cal}}(\alpha) &= \left\{\theta \in \Theta: E^{\mathrm{cal}}(\theta) < \frac{1}{\alpha}\right\} = \left\{\theta \in \Theta: f(P^{\mathrm{dual}}(\theta)) < \frac{1}{\alpha}\right\} \\
    &\labelrel={rel:CalibUSC} \left\{ \theta: P^{\mathrm{dual}}(\theta) >  \max \left\{p: f(p) \geq \frac{1}{\alpha}\right\} \right\}\\
    &= \left\{ \theta: P^{\mathrm{dual}}(\theta) >  f^{-1}\left(\frac{1}{\alpha}\right) \right\} \\
    &\labelrel={rel:CIUSC}  C\left(f^{-1}\left(\frac{1}{\alpha}\right)\right).
\end{align} The equality at \eqref{rel:CalibUSC} is a result of \(f\) being nonincreasing and upper semicontinuous at \(1 / \alpha\). Hence the supremum is achieved and the equality holds. Similarly, the equality at \eqref{rel:CIUSC} is because \(C\) is nonincreasing and continuous from below at \(f^{-1}\left(1 / \alpha\right)\). If \(P^{\mathrm{dual}}(\theta) = f^{-1}\left(1 / \alpha\right)\), then \(\theta \not\in C_i(P^{\mathrm{dual}}(\theta))\) by \(C\) being continuous from below.

\subsection{Proof of \Cref{thm:Sharpness}}
\label{sec:SharpnessProof}

To prove the sharpness of the above situation, we require the following fact about Brownian motions.

\begin{fact}[{\citealt[p.223, equation 2.1.2]{borodin_handbook_brownian_1996}}]
Let \((W_t)_{t \in \reals^+_0}\) be a Brownian motion  with drift $\mu$ and initial value $x$. Define stopping times \(\tau^{a} \coloneqq \inf\{t \geq 0: W_t \geq x + a\}\), and \(\tau^{b} \coloneqq \inf\{t \geq 0: W_t \leq x - b\}\), where \(a, b > 0\). Then,
\begin{align}
\prob{\tau^a < \tau^b} =f(\mu,a,b) := 1-e^{-\mu b} \frac{\sinh (|\mu| a) }{\sinh(|\mu| (a+b))}.
\end{align}
\label{fact:StoppingProb}
\end{fact}
We note that $f(\mu,a,b)$ is increasing in $\mu$ since the larger the drift term, the more likely that the Brownian motion hits the upper boundary $x+a$.  Moreover,
\begin{equation}
\label{eq:FLimit}
 \lim_{\mu\uparrow 0}f(\mu,a,b)= 1-  \lim_{\mu\uparrow 0}  e^{-\mu b} \frac{e^{|\mu| a } - e^{-|\mu| a      }  }{e^{|\mu|(a+b)} - e^{-|\mu|(a+b)}} = 1- \frac{a}{a+b}=\frac {b}{a+b},
\end{equation}
which recovers the simple case in which $\mu=0$ (i.e., the case that the Brownian motion is a martingale). Only the above properties of $f$  (not its precise formula) will be used in the result below.

Using \Cref{fact:StoppingProb},
for $i\in [K]$,  the probability that $W_i $ hits $\gamma-1$ before hitting $-1$ is given  by
 $$
  \prob{W_i^{\tau_i} =   \gamma -1} =   f(\theta_i^*, \gamma-1,1).
 $$
We can compute the mean of $|\selset|$ as
\begin{align}
\expect[|\selset|] &= \expect\left [ \sum_{i\in [K]} \ind{W_i^{\tau_i}  =  \gamma-1  } \right]\\
&= \sum_{i\in [K]}  \pr( W_i^{\tau_i} =  \gamma -1) = \sum_{i\in [K]} f(\theta_i^*, \gamma-1,1).
\label{eq:SelectSize}
\end{align}
 For $i\in\selset$, using \Cref{fact:StoppingProb} again and noting that $W_i^{\tau_i} = \gamma-1$, we have
\begin{equation}\label{eq:beta} \prob{W_i^{\tau_i}= \beta-1 \mid \selset}
=  f(\theta_i^*, \beta-\gamma,\gamma).   \end{equation}

Let us suppose that $\theta_i^*<0$ for each $i\in [K]$ (but this is not known to the scientist); that is, $X^i(0)$ is   a strict supermartingale.
In this case,   $E_i(0)=\beta$ only if $E_i(\theta_i^*) >\beta$.

Recall that $C_i(\alpha)=   \{\theta \in \Theta : E_i(\theta) < 1 / \alpha\}$.
We can now compute FCR, using  \eqref{eq:beta}, as
 \begin{align}
    \mathrm{FCR}  & = \expect\left[\frac{\sum_{i\in \selset} \ind{\theta_i^* \not \in C_i( 1/ \beta) }}{|\selset| \vee 1}\right]\\&=\expect\left[   \expect\left[\frac{\sum_{i\in \selset} \ind{E_i(\theta_i^*)\ge \beta }}{|\selset|\vee 1}\mid \selset\right]  \right] \\&=\expect\left[ \sum_{i\in \selset}\frac{ \prob{E_i(\theta_i^*) \ge \beta \mid \selset}}{|\selset|\vee 1} \right]\\
    &\ge \expect\left[ \sum_{i\in \selset}\frac{ \prob{E_i(0) = \beta \mid \selset}}{|\selset|\vee 1} \right]
    =\expect\left[ \frac{ \sum_{i\in \selset} f(\theta_i^*, \beta-\gamma,\gamma)}{|\selset|\vee 1} \right] .
\end{align}
Obviously, FCR is a function of $\boldsymbol\theta^*:=(\theta_1,\dots,\theta_K)$.
We consider the situation where $\theta_i^*\uparrow 0$ (i.e.,\ \(\theta_i^*\) approaches 0) for each $i\in [K]$, denoted by  $\boldsymbol\theta^*\uparrow \mathbf 0$.
By applying \eqref{eq:FLimit}, we get
 $$ f(\theta_i^*, \beta-\gamma,\gamma) \uparrow \frac{\gamma }{\beta} = \frac{\delta |\selset| \gamma}{K},$$ as $\theta_i^*\uparrow 0$. Moreover, when $\boldsymbol\theta^*\uparrow \mathbf 0$,    \eqref{eq:FLimit} and \eqref{eq:SelectSize}  together yield
 $$
  \expect_{\boldsymbol\theta^*}[|\selset|] \uparrow \frac{ K}{\gamma},
 $$
where we emphasize that the expectation is taken with respect to $\boldsymbol\theta^* $ (which varies).
Using the above results and the monotone convergence theorem, we get
 \begin{align}
\lim_{\boldsymbol\theta^*\uparrow \mathbf 0} \expect_{\boldsymbol\theta^*}\left[ \frac{ \sum_{i\in\selset} f(\theta_i^*, \beta-\gamma,\gamma)}{|\selset|\vee 1} \right]
 &= \lim_{\boldsymbol\theta^*\uparrow \mathbf 0}  \expect_{\boldsymbol\theta^*}\left[ \frac{\delta |\selset| \gamma}{K} \ind{|\selset|>0} \right]\\
    &= \lim_{\boldsymbol\theta^*\uparrow \mathbf 0}  \expect_{\boldsymbol\theta^*}\left[\frac{\delta |\selset| \gamma }{K} \right] = \delta.
\end{align} Since the FCR is upper bounded by \(\delta\) as a result of using the e-BY procedure (\Cref{thm:eBY}), we have shown our desired statement.

\begin{remark}
From the analysis above, we can see that the value $\delta$ of FCR of e-BY  is sharp when $\boldsymbol\theta^*$ approaches $\mathbf 0$ from below.
 Since both convergences  in the above  computation are monotone in $\boldsymbol\theta^*$, we have, in general, that $\mathrm{FCR}\le \delta$ for $\boldsymbol\theta^* \le \mathbf 0$.
On the other hand, if some $\theta_i^*>0$, then $E_i(\theta_i^*) < E_i(0) \le \beta$, and hence $\prob{E_i(\theta_i^*)>\beta}=0$, leading also to a smaller FCR for the above procedure.
(If one chooses a sightly larger threshold $\beta>K / (\delta |\selset|)$, then the scenario $\boldsymbol\theta^*\approx \mathbf 0$ with $\boldsymbol\theta^*\ge 0$ also has an FCR close to $\delta$.)

The independence of the e-processes is only used in \eqref{eq:beta} in the second step. This condition can be weakened to the independence between the e-processes in step 1 and their increment processes in step 2. The independence assumption does not reduce the FCR, at least not in an obvious way.
\end{remark}
\subsection{Proof of \Cref{lemma:ECINonincreasing}}
\label{sec:ProofECINonincreasing}

We first show the ``only if" statement.
Suppose that $C\in \mathrm{ECI}(\Theta)$ with associated e-value $E$.
It is clear that $C$ is nonincreasing.
To show its continuity from above, note that $E(\theta) < 1 / \alpha$
if and only if $E(\theta) < 1 / \beta$ for some $\beta>\alpha$.
Therefore, we have $\bigcup_{\beta > \alpha} C(\beta) = C(\alpha)$, and thus $C$ is continuous from below.

We next show the ``if" statement.
Let the   function $t:\Theta\to [0,\infty]$ be given by
\begin{equation}\label{eq:deft}
t(\theta)= \sup\ \left\{\frac{1}{\alpha}: \alpha \in [0,1),~\theta \not \in C(\alpha)\right\},
\end{equation}
where the convention is $\sup\emptyset = 1$ (because we can always set $C(1)=\emptyset$ so that $\theta \not\in C(1)$ In the two extreme cases,  if $C(\alpha)$ is  $\Theta$ on $[0,1)$, then $t(\theta)=1$ for each $\theta$, and if $C(\alpha)$ is   $\emptyset$ on $[0,1)$, then $t(\theta)=\infty$ for each $\theta$.

Since $C$ is nonincreasing and continuous from below,   \eqref{eq:deft} implies \(\theta \not\in C(t(\theta))\). Thus, we get the following equivalence:
\begin{equation}\label{eq:equi-t}
t(\theta) <\frac{1}{\alpha} ~\Longleftrightarrow~ \theta \in C(\alpha).
\end{equation}
Let $U $ be a standard uniform random variable under each of the parameter $\theta\in\Theta$, and  define
\begin{equation}\label{eq:defe}
E(\theta)  = t(\theta)\ind{U < \frac{1}{t(\theta)}},~~\theta \in\Theta.
\end{equation}
It is clear that $E(\theta)\ge 0$ and $\expect_\theta[E(\theta)]= 1$, and \eqref{eq:associated} holds if $E(\theta)=t(\theta)$ for each $\theta\in \Theta$, which is a possible realization of $E$.

\subsection{Proof of \texorpdfstring{\Cref{th:admissible}}{admissibility theorem}}
\label{sec:AdmissibilityProof}

We fix a universal probability space \(\mathfrak{P}\) and parameter ranges $\Theta_1, \dots, \Theta_K$.

We will proceed to prove \Cref{th:admissible} by contradiction. Suppose that there exists a CI reporting procedure $\Phi$ which strictly dominates the e-BY procedure.
That means  for all $S \subseteq [K]$ and $\mathbf C\in \mathrm{ECI}_K$, $ \Phi(S,\mathbf C)_i \subseteq C_i(\delta|S|/K) $,
and
\begin{gather}
    \text{there exists some }S\subseteq [K], \mathbf C\in \mathrm{ECI}_K\text{ and }i^*\in S\\
    \text{ s.t.\ }\Phi(S,\mathbf C)_{i^*} \subsetneq C_{i^*}(\delta|S|/K).
\end{gather}
From now on, fix the above $(S,\mathbf C, i^*)$ --- these values will be key components for our construction, which will result in a contradiction.
Clearly, $S$ is not empty.

We assume that there exists random variables $B_{\selset}$ and $U$ in each $\pdist \in \Pcal$ (the existence is guaranteed by enlarging the probability space if needed) that are distributed as follows:
\begin{align}
    B_{\selset} \sim \text{Bern}(|S| / K)\text{ and }U \sim \text{Uniform}[0, 1],
\end{align} and we let $B_\selset$ and $U$ be independent of each other.

 We will show that
$ \mathrm{FCR}(\Phi) >\delta $ for some random selection $\selset$, vector of families of e-values $\mathbf E=(\{E_1(\theta)\}_{\theta \in \Theta_1},\dots,\{E_K(\theta)\}_{\theta \in \Theta_K})$ and true values of parameters $\vecb\theta^*=(\theta_1^*,\dots,\theta_K^*)$.
The main work is to construct such a setting.

First, we define the distribution of our selection rule $\selset$. Let $ T:=[K]\setminus S$ be the complement of $S$. We specify $\selset = S$ if $B_\selset = 1$ and $\selset = T$ otherwise. Hence,
under all vectors of possible parameter values in $\Theta_1\times\dots \times\Theta_K$, $\selset$ has the following distribution:
\begin{align}
    \p(\selset=S)=\p(B_\selset=1) = |S|/K, \text{ and }\p(\selset=T) = \p(B_\selset = 0) = |T|/K,
\end{align}
In the following, statements  for $i\in T$ can be ignored if $T$ is empty.

We briefly explain the main idea behind our construction. We first note that   any construction of $\mathbf E$ and $\vecb{\theta}^*$, if the $i$-th component of $\Phi$ is equal to that of e-BY for all $i\ne i^*$. Then,  for $i\in\selset\setminus \{i^*\}$,
\begin{align}
\p\left( \theta_i \not \in   \Phi(\selset,\mathbf C^\mathbf E)_i ,~ \selset =S    \right)  &=
\p\left( \theta_i \not \in   C_i^{\mathbf E} \left(\frac{\delta|\selset|}{K}\right),~ \selset =S    \right)\\
&\le \p\left( E_i(\theta_i) \ge \frac{K}{\delta|\selset|}\right)
\le \frac{\delta|\selset|}{K},
\end{align}
and  for $i\in T$,
\begin{align}
\p\left( \theta_i \not \in   \Phi(T,\mathbf C^\mathbf E)_i ,~ \selset =T      \right) &=
\p\left( \theta_i \not \in   C_i^{\mathbf E} \left(\frac{\delta|T|}{K}\right),~ \selset =T\right)\\
&\le \p\left( E_i(\theta_i)  \ge \frac{K}{\delta|T|}\right)
\le \frac{\delta|T|}{K}.
\end{align}
We will need to construct suitable $\mathbf E$ and $\vecb{\theta}^*$ such that all above   inequalities hold as  equalities approximately by taking a limit, and for the index $i^*$ we have
$$\p\left( \theta_{i^*} \not \in     \Phi(\selset,\mathbf C^\mathbf E)_{i^*} ,~ \selset =S  \right)   >  \frac{\delta|\selset|}{K}+c.
$$
where $c>0$ is a constant that does not shrink to $0$ when we take a limit.
If we are able to achieve the above, then the FCR of $\Phi$ will be close to $\delta +c/ |\selset|$.

Let us specify our choice of $\vecb{\theta}^*$. First, take a small number $\epsilon  \in (0,1)$ which will later shrink to $0$.
\begin{enumerate}[label=(\alph*)]
\item
We take $\theta^*_{i^*} \in  C_{i^*}\left(\delta|\selset|/K\right)  \setminus \Phi(\selset,\mathbf C)_{i^*} $
which is possible since $ \Phi(\selset ,\mathbf C )_{i^*} \subsetneq C_{i^*}\left(\delta|\selset|/K\right)$.
\item  For $i\in\selset\setminus \{i^*\}$, we
take $\theta^*_i \in C_{i}\left((1-\epsilon)\delta|\selset|/K\right) \setminus C_{i}\left(\delta|\selset|/K\right),$
which is possible since $\mathbf C$ is strictly decreasing.
\item  For $i\in T $, we
take $\theta^*_i \in C_{i}\left((1 - \epsilon)\delta|T|/K\right) \setminus C_{i}\left(\delta|T|/K\right).$
\end{enumerate}
Note that by this construction, we have  $\theta^*_i\not\in \Phi(\selset,\vecb C)_i  $ for each $i\in [K]$. Further, we can arbitrarily select the true values for each parameter, $\vecb{\theta}^*$, with the following construction of our universe of probabilities $\Pcal$. We can let our universe of distributions contain joint distributions over $((U, B_{\selset}, Y_1), \dots (U, B_{\selset}, Y_K))$ where $Y_i$ is some random variable whose distribution determines $\theta_i^*$ for each $i \in [K]$. Let $\pdist_{Y_i}$ denote the marginal distribution of $Y_i$ in $\pdist$. We define $\vartheta_i(\pdist) = \vartheta_{Y_i}(\pdist_{Y_i})$, where $\vartheta_{Y_i}: \Pcal_{Y_i} \rightarrow \Theta_i$ is a functional that maps from the universe marginal distributions of $Y_i$, $\Pcal_{Y_i}$, to the parameter value space $\Theta_i$.
Since the distribution of $B_{\selset}$ and $U$ are identical across any $\pdist \in \Pcal$, we can simply select some $\vecb\theta$ and let $\pdist^* = \pdist_{\vecb\theta}$, where $\pdist_{\vecb\theta}$ is some distribution in $\Pcal$ with true values $\vecb\theta$ for its parameters.

For $i\in [K]$, similarly to \eqref{eq:deft}-\eqref{eq:equi-t}, define
\begin{align}t_i(\theta)= \sup\left\{\frac{1}{\alpha}: \alpha \in [0,1),~\theta \not\in C_i(\alpha)\right\}; \\~\mbox{this gives }~
t_i(\theta) < \frac{1}{\alpha} ~\Longleftrightarrow~ \theta \in C_i(\alpha).
\label{eq:defti}
\end{align}
Next, we define some quantities concerning the true parameter \(\vecb{\theta}^* = (\theta^*_1, \dots, \theta^*_K)\). \begin{align}
r_S^* = \frac{K}{\delta |\selset|}, ~~t_S:=\max_{i\in\selset} t_i(\theta^*_i) \in \left [ r^*_S, \frac{ r^*_S   }{1 -\epsilon}\right) ,\\
t_S^*:=t_{i^*}(\theta^*_{i^*})<r^*_S, ~~\ell_S \coloneqq \frac{t_S - t_S^*}{r_S^*}.
\end{align}

The variables marked with an asterisk $(*)$ are not dependent on \(\epsilon\) --- only \(t_S\) and \(\ell_S\) depend on \(\epsilon\).

Let  $U$ be a random variable that is standard uniform  and independent of $\selset$ under all $\pdist_{\vecb{\theta}'}$, and the existence of such may be achieved by enlarging the probability space.
Define the random variables $U_S$ and $U_T$ indexed by \(\vecb{\theta}'\) as follows. \begin{align}
U_S(\vecb{\theta}') &\coloneqq \frac{|\selset|}{K}U\ind{\selset=S}\left(\ind{ U \geq \frac{K}{|\selset|t_S }} \vee \ind{\vecb{\theta'} \neq\vecb{\theta}^*} \right)\\
&\qquad + \left(\frac{|T|}{K}U+\frac{|\selset|}{K}\right)\ind{\selset=T} \\
U_T &\coloneqq \frac{|T|}{K}U\ind{\selset=T} + \left(\frac{|\selset|}{K}U+\frac{|T|}{K}\right)\ind{\selset=S}.
\end{align}

Note that \(U_S(\vecb{\theta}')\) is 0 with probability \(1/t_S\) and uniformly distributed between \(\left[1/t_S, 1\right]\) under \(\pdist_{\vecb{\theta}'}
\) otherwise if and only if \(\vecb{\theta}' = \vecb{\theta}^*\).
If
\(\vecb{\theta'} \neq \vecb{\theta}^*\), then under \(\pdist_{\vecb{\theta}'}
\), \(U_S(\vecb{\theta}')\) and \(U_T\) are both uniform random variables over \([0, 1]\).

\begin{comment}
Define two more values a follows.
\begin{align}
     ~~\ell_S(\vecb{\theta}') = \frac{t_S(\vecb{\theta}') - t_{S^*}(\vecb{\theta}')}{r_S^*}.
\end{align}
\end{comment}
We are ready to define our e-values.
We first define $E_{i^*}$, which is the most sophisticated object:
\begin{align}
E_{i^*}(\theta)  =~& t_{i^*}(\theta)\ind{U_S <  \frac{1}{t_{i^*}(\theta) \vee t_S}}\\
&+ r^*_S\ind{\frac{1}{t_S}\le U_S <  \frac{1 + \ell(\theta)}{t_S}} \ind{t_{i^*}(\theta)< t_S},
\end{align}
for all each $\theta \in\Theta_{i^*}$, where
$$\ell(\theta)\coloneqq\frac{t_S - t_{i^*}(\theta)}{r_S^*}.$$

Clearly \(E_{i^*} \geq 0\). It remains to show that, for each $\theta \in \Theta_{i^*}$, the expectation is bounded as \(\expect_\theta[E_{i^*}(\theta)] \leq 1\). We proceed casewise on \(t_{i^*}(\theta)\).
If $t_{i^*} (\theta) \ge t_S$, then $$ \expect_{\theta}  [ E_{i^*}(\theta)   ] \le \expect_{\theta} \left[ t_{i^*}(\theta)\ind{U_S <  \frac{1}{t_{i^*}(\theta)}}\right]=  1. $$
The inequality is simply by construction of \(U_S\) --- \(U_S\) is uniform if \(\theta\) is the true parameter and \(t_{i^*}(\theta) > t_S\). When \(t_{i^*}(\theta) = t_S\), the following is still true: \(\prob{U_S < 1/{t_S}} = 1/{t_S}\).  If $t_{i^*}(\theta) < t_S$, then
$$
 \expect_{\theta} \left[ E_{i^*}(\theta)  \right] = \frac{t_{i^*}(\theta)}{t_S}  +  \frac{r^*_S\ell(\theta)}{t_S}  =  \frac{t_{i^*}(\theta)}{t_S}  + \frac {t_S-t_{i^*}(\theta)} { t_S}
 =1.
 $$

 Note that the first equality is true again because \(U_S\) always satisfies \[\prob{U_S  < 1/{t_S}} = 1/{t_S}\] and is otherwise uniformly distributed among values greater than \(1/{t_S}\).
 Therefore, $E_{i^*}$ is an e-value for $\Theta_{i^*}$. Note that in our choice of true parameters \(\vecb{\theta}^*\),  we are always in the \(t_{i^*}(\theta^*_{i^*}) = t_S^* < t_S\) case.

\begin{comment}
Moreover,
\begin{equation}\label{eq:usex}
x(\theta _{i^*}) > r_S ~\Longleftrightarrow ~
\frac{\ell_S(t_S-t_{S}^*)}{t_S-\ell _S } >r_S ~\Longleftrightarrow ~
\ell_S > \frac{t_Sr_S}{t_S-t_S^*+r_S} = r_S^* .
\end{equation}
Therefore, we get  $x(\theta _{i^*}) >r_S$.
\end{comment}

The other e-values are defined as  $$
 E_i(\theta)  = t_i(\theta)\ind{U_S < \frac{1
 }{t_i(\theta)}},~~\theta \in\Theta_i, ~~~i\in\selset\setminus \{i^*\},
 $$
 and
 $$
 E_i(\theta)  = t_i(\theta)\ind{U_T< \frac{1}{t_i(\theta)}} ,~~\theta \in\Theta_i, ~~~i\in T.
 $$
It is straightforward to check that each $E_i(\theta) $ above is an e-value  for  $\theta \in \Theta_i$.

Now that we have defined our e-values, we will show there exists a distribution where the FCR under CI-reporting procedure \(\Phi\) will be strictly larger than \(\delta\).

Note that by construction of \(U_S\) and \(U_T\), we know the following:\begin{equation}\label{eq:usut}
\left\{U_S \le \frac{|\selset|}{K}\right\} = \{ \selset=S\}~\mbox{and}~\left\{U_T \le \frac{|T|}{K}\right\} = \{ \selset=T\}.
\end{equation}
By \eqref{eq:defti} and the construction of $(\theta^*_i)_{i\in [K]}$, we have  $t_{i^*}(\theta^*_{i^*}) <K/(\delta|\selset|)$ as well as
\begin{align}
&\frac{K}{\delta|\selset|}\le t_i(\theta^*_i) <\frac{K}{(1 -\epsilon)\delta|\selset|} &~\mbox{for $i\in\selset\setminus \{i^*\}$}\notag\\
\mbox{~~~and~~~}&\frac{K}{\delta|T|}\le t_i(\theta^*_i) <\frac{K}{(1 -\epsilon)\delta|T|} &~\mbox{for $i\in T$.}
\label{eq:impcond}
\end{align}
Putting the above ranges of $t_i(\theta^*_i)$ together,  using \eqref{eq:usut}, we get
\begin{align}
&\left\{U_S < \frac{1}{t_i(\theta^*_i)}\right\}\subseteq \{\selset=S\}&\mbox{for all $i\in\selset\setminus\{i^*\}$}\notag\\
\mbox{and}\ & \left\{U_T < \frac{1}{t_i(\theta^*_i)}\right\}\subseteq \{\selset=T\}
&\mbox{for all $i \in T$.}
\label{eq:impcond2}
\end{align}
\begin{comment}
Note that the order for the above quantities is
\begin{equation}
t_S^*<r_S <\ell_S<r^*_S \le t_S< \frac{ r^*_S   }{1 -\epsilon} ,
\end{equation}
and quantities with an asterisk   $(*)$ does not depend on $\epsilon$.
\end{comment}
Moreover,
As $\epsilon\downarrow 0$, we have $t_S \downarrow r^*_S$,
and
$$
\ell_S \to \ell_S^* \coloneqq  1 - \frac{t_S^*}{r_S^*} >0.
$$
\begin{comment}
which implies
\begin{equation}\label{eq:useell}
\ell_S  \to \ell_S^*:= \frac{1}{2} \left(r^*_S+ \frac{ (r_S^*)^2}{2r^*_S-t_S^* }  \right)<r^*_S.
 \end{equation}
\end{comment}

The above construction of e-values leads to the important condition under true parameters \(\vecb{\theta}^*\), via \eqref{eq:defti},
\begin{equation}
\label{eq:alleffort}
U_S<\frac{1}{t_S} ~\Longrightarrow~ E_i(\theta^*_i) = t_i(\theta^*_i) \mbox{ for all $i\in\selset$} ~\Longrightarrow~
\mathbf C^{\mathbf E}_S  = \mathbf C_S.
\end{equation}

Recall that by construction, we have $\theta^*_i\not\in \Phi(\selset,\mathbf C)_i$ for each $i\in\selset$.
Therefore,   by using \eqref{eq:impcond2} and \eqref{eq:alleffort}, for $i \in S\setminus \{i^*\}$,
\begin{align}
\p\left( \theta^*_i \not \in \Phi(\selset,\mathbf C^\mathbf E)_i ,~ \selset =S \right)& \ge
\p\left(   \mathbf C^{\mathbf E} _S= \mathbf C_S,~ \selset =S \right)\notag\\
&\ge\p\left( U_S<\frac{1}{t_S} ,~ \selset =S \right)  = \frac{1}{t_S}.  \label{eq:iinS}
\end{align}
Further, by construction, we know that \(t_S^* < t_S\), so we have
\begin{align}
&\p\left( \theta^*_{i^*} \not \in \Phi(\selset,\mathbf C^\mathbf E)_{i^*} ,~ \selset =S \right)\\
& \ge
\p\left(   \mathbf C^{\mathbf E} _S= \mathbf C_S,~ \selset =S \right) +\p\left(  E_{i^*}(\theta) \ge r_S^*,~  \mathbf C^{\mathbf E} _S\ne \mathbf C_S,~ \selset =S \right)   \notag
\\&
 \ge  \frac{1}{t_S} +  \p\left( \frac{1}{t_S}\leq  U_S<\frac{1 + \ell_S}{t_S}\right)  = \frac{1 + \ell_S}{t_S}.
  \label{eq:istar}
\end{align}
Write
 $$r^*_T: = \frac{K}{|T| \delta } ,~\mbox{~and~}~t_T:=\max_{i\in T} t_i(\theta^*_i)  \in \left [ r^*_T, \frac{1}{1 -\epsilon} r^*_T \right).$$
 In case $|T|=0$ the above quantities are set to $\infty$.

For $i\in T$,
using $\Phi(\selset,\mathbf C^\mathbf E)_i\subseteq  C^\mathbf E_i (\frac{\delta|\selset|}{K})$ and \eqref{eq:impcond}-\eqref{eq:impcond2}, we have
\begin{align}
\p\left( \theta^*_i \not \in \Phi(T,\mathbf C^\mathbf E)_i ,~ \selset =T \right)& \ge
\p\left( \theta^*_i \not \in C^\mathbf E_i \left(\frac{\delta|T|}{K}\right),~ \selset =T \right) \notag
\\& =
\p\left( E_i(\theta^*_i) \ge   r^*_T,~ \selset =T \right) \notag
\\ & \ge
\p\left(E_i(\theta^*_i) \ge r^*_T,~ U_T< \frac{1}{t_T},~ \selset =T \right) \notag \\ & =
\p\left(  U_T< \frac{1}{t_T}  \right) =\frac{1}{t_T}. \label{eq:iinT}
\end{align}
Putting \eqref{eq:iinS}, \eqref{eq:istar} and  \eqref{eq:iinT}, together, we obtain
\begin{align}  &\expect\left[\frac{\sum_{i\in \selset} \ind{\theta^*_i \not \in \Phi(\selset,\mathbf C^\mathbf E)_i  }}{|\selset| \vee 1}\right]\\
&=
\expect\left[\frac{\sum_{i\in   S} \ind{\theta^*_i \not \in \Phi(\selset,\mathbf C^\mathbf E)_i  }}{| S| \vee 1}
\ind{\selset=S}     \right] \\
&\qquad + \expect\left[\frac{\sum_{i\in   T} \ind{\theta^*_i \not \in \Phi(T,\mathbf C^\mathbf E)_i  }}{|T| \vee 1}   \ind{\selset=T }    \right]
\\& \ge   \frac{1}{|\selset| }\left(\frac{1 + \ell_S}{t_S}\right) +   \frac{1}{|\selset| } \sum_{i\in   S\setminus\{i^*\}} \frac{1}{t_S} + \frac{1}{|T|  \vee 1} \sum_{i\in T} \frac{1}{t_T}
\\&= \frac{\ell_S}{|\selset|t_S}+ \frac{1}{t_S} + \frac{1 }{t_T}  .
\end{align}
 Note that as $\epsilon \downarrow 0$, $t_S\to r_S^*$,  $t_T\to r_T^*$, and $\ell_S\to \ell_S^* > 0$ by \(t_S^* < r_S^*\).
Therefore,
$$\lim_{\epsilon \downarrow 0}  \frac{\ell_S}{|\selset|t_S}+ \frac{1}{t_S} + \frac{ 1}{t_T} = \frac{\ell_S^*}{|\selset|r_S^*} + \frac{1}{r^*_S}+ \frac{ 1  }{r^*_T}
 > \frac{\delta|\selset| }{K } + \frac{|T|\delta}{K}=\delta.$$
Thus, the FCR of the procedure $\Phi$ for our constructed setting is large than $\delta$. This shows that the e-BY procedure is admissible.

\begin{remark}\label{rem:morediscussion}
From the proof of  Theorem \ref{th:admissible}, we can see that the restriction (b)
that  $\Phi(\selset,\mathbf C)   =  \Phi(\selset,\mathbf C')
 $
 if $\mathbf C_S=\mathbf C'_S$ is important, because on the event $\selset=S$,  we only require  $ \mathbf C^\mathbf E _S=\mathbf C_S$, but not $\mathbf C^\mathbf E=\mathbf C$.
In our construction, we actually have $E_i(\theta^*_i)=0$ for $i\in T$ on the event $\{\selset=S\}$. Recall that we need many inequalities in the FCR formula to be approximately equalities.
 The only way to produce precisely the event $\{\mathbf C^\mathbf E=\mathbf C,~\selset=S\}$ is through the event $\{E_i=t_i \mbox{ for all $i\in [K]$ and $\selset=S$}\}$, but for the approximation we need $\expect[E_i \ind{\selset=T}]$ for $t\in T$ to be arbitrarily close to $1$, so it does not seem to be possible if we ``waste'' $E_i$, $i\in T$ to take positive values on the event $\selset=S$.
\end{remark}

 \begin{remark}
The proof of Theorem \ref{th:admissible} also justifies that e-BY has sharp FCR.
To see this,  for $\Phi$ being e-BY and the setting constructed in the proof of Theorem \ref{th:admissible} (omitting the special  treatment for $i^*$), using \eqref{eq:iinS} and  \eqref{eq:iinT},  we get
\begin{align}
&\p\left(\theta^*_i \not \in \Phi(\selset,\mathbf C^\mathbf E)_i ,~ \selset =S \right) \ge 1/t_S\text{ for }i\in\selset,\\
&\p\left(\theta^*_j \not \in \Phi(\selset,\mathbf C^\mathbf E)_j ,~ \selset =T \right) \ge 1/t_T
 \text{ for }j\in T.
\end{align}
The rest is taking a limit, and we see that the e-BY procedure has an FCR arbitrarily close to $\delta$ for this setting.
   \end{remark}

\section{Multiple testing procedures based on e-BY}
\label{sec:MultipleTesting}

We can derive procedures for the multiple testing problem directly from post-selection inference procedures. We will discuss how that can be accomplished, and provide explicit examples of such derivations. In the multiple testing problem, we wish to identify as many parameters that lie outside of a fixed null hypothesis as possible. Formally, the goal is to determine whether \(\theta_i^* \in \Theta^0_i\) where \(\Theta^0_i \subseteq \Theta_i\) is the null hypothesis (as opposed to directly estimating \(\theta^*_i\)). Thus, for each \(i \in [K]\), we must output a decision of whether we reject the null hypothesis \(H^0_i: \theta^*_i \in \Theta^0_i\) or not. We denote set of indices where the null hypothesis is rejected as \(\rejset\). The null hypotheses in the rejection set (\(i \in \rejset\) and \(H^0_i\) is true) are called \textit{false discoveries}. Analogous to the FCR, we wish to control the \textit{false discovery rate} (FDR), which is the expectation of the \textit{false discovery proportion} (FDP) :
\begin{align}
    \FDP \coloneqq \frac{\sum\limits_{i \in \rejset} \ind{\theta_i^* \in \Theta^0_i}}{|\rejset|}, \qquad \FDR \coloneqq \expect[\FDP].
\end{align}

In \Cref{fig:MultipleTestingFlow}, we depict how the e-BY and BY procedures subsume results about FDR controlling multiple testing procedures as a special case. The multiple testing analog of the BY procedure is the Benjamini-Hochberg (BH) procedure \citep{benjamini_controlling_1995-2,benjamini_control_false_2001}, and BY showed that FCR control of the BY procedure implies FDR control of the BH procedure. The same implication between the e-BY procedure and the e-BH procedure \citep{wang_false_2020}.
Further, the e-BY procedure is not a naive application of the e-BH procedure to the post-selection inference problem. In fact, the e-BY procedure is a more powerful result than the e-BH procedure in the sense that \Cref{thm:eBY} implies FDR control for the e-BH procedure while the converse is not true. 

\begin{figure}[h]
    \centering
    \includegraphics[width=\textwidth]{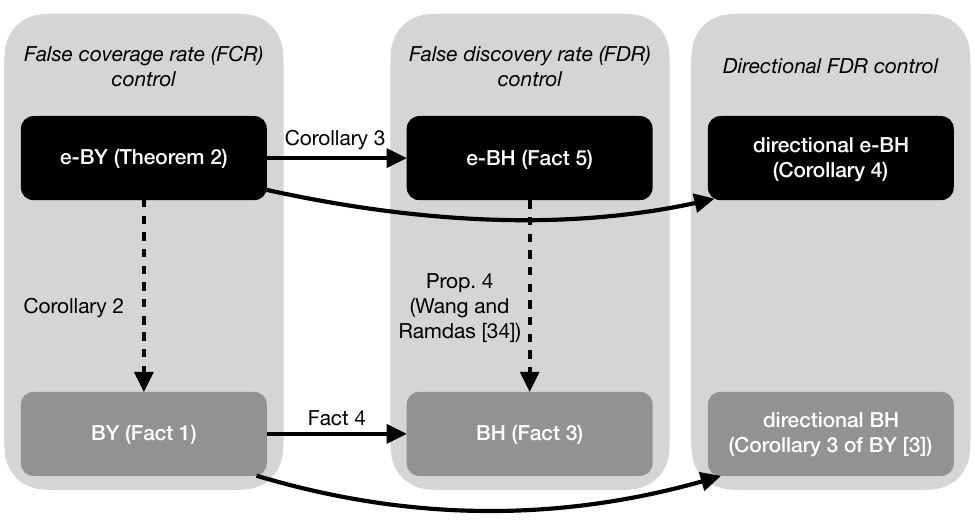}
    \caption{A diagram depicting the relationships between different procedures and their guarantees for FCR control in post-selection inference, and FDR control in multiple testing. The solid arrows signify that the target result is a special case of the source result. The dashed line signifies that the part of the target result that assumes arbitrary dependence between data is implied by the source result. This diagram illustrates how multiple testing procedures with FDR guarantees are downstream consequence of procedures for the post-selection inference problem with FCR guarantees.}
    \label{fig:MultipleTestingFlow}
\end{figure}
\subsection{Benjamini-Hochberg (BH) is a special case of BY}
\label{sec:BHtoBY}
BY showed that their procedure can be used to demonstrate FDR validity (i.e.\ provably controls FDR under some fixed level \(\delta\)) of the BH procedure. However, the BH procedure does not directly operate on CIs --- instead it takes as input \textit{p-values} i.e.\ \(P_i\) denotes the p-value for the \(i\)th null hypothesis for each \(i \in [K]\). \(P_i\) the superuniformity property under the null: \(\prob{P_i \leq \alpha} \leq \alpha\) for all \(\alpha \in (0, 1)\) if \(\theta^*_{i^*} \in \Theta_i^0\).

\begin{comment}
\begin{algorithm}
\caption{The e-BH \citep{wang_false_2020} and BH \citep{benjamini_controlling_1995-2} procedures for FDR control in the multiple testing problem. BY demonstrated that they can prove FDR and \(\dFDR\) (see \Cref{sec:FSR}) control for BH by reducing the multiple testing with p-values problem to the post-selection inference with CIs problem. Similarly, we will show that we can recover the FDR control shown by \cite{wang_false_2020} and also derive a novel \(\dFDR\) control result by reducing the multiple testing with e-values problem to post-selection inference with e-CIs.}
\end{algorithm}
\end{comment}

The BH algorithm proceeds as follows. Let \(P_1, \dots, P_K\) be the set of p-values computed from \(X_1, \dots, X_K\) for each hypothesis, and let \(P_{(1)}, P_{(2)}, \dots, P_{(K)}\) denote these p-values ordered from smallest to largest. Define a quantity \(D(K) \coloneqq K\) if the p-values corresponding to true null hypotheses are independent, or satisfy a positive dependence condition (see \citep{benjamini_control_false_2001}) and \(D(K) \coloneqq K\ell_K\) otherwise.
\begin{align}
k_{\BH} &\coloneqq
\max \left\{k \in [K]: P_{(k)} \leq \delta k / D(K)\right\} \cup \{0\},\\
\rejset_{\BH}&\coloneqq \left\{i \in [K]: P_i \leq  \delta k_{\BH} / D(K)\right\}.
\end{align}
\begin{fact}[BH procedure controls FDR \citep{benjamini_controlling_1995-2,benjamini_control_false_2001}]
The BH procedure, which rejects the hypotheses indexed by \(\rejset_{\BH}\), ensures \(\FDR \leq \delta\).
  \label{fact:BH}
\end{fact}
BY observed that the BH procedure can actually be formulated completely in terms of CIs and the BY procedure in the following fashion.
\begin{fact}[Corollary 2 of BY]
Let \(\selset = \rejset_{\BH}\), and define the CIs as follows:
\begin{align}
    C_i(\alpha) = \begin{cases}
\Theta_i \setminus \Theta^0_i & \text{if }P_i \leq \alpha\\
\Theta_i & \text{if }P_i > \alpha
\end{cases}.
\end{align}

Recall that, by definition of \(\rejset_{\BH}\), \(P_i \leq \delta k_{\BH}/D(K)\) for all \(i \in \rejset_{\BH}\). When we apply the BY procedure, we construct exactly \(C_i(\delta k_\BH/D(K)) = \Theta_i \setminus \Theta^0_i\) for each \(i \in \rejset_{\BH}\). As a result, a false discovery is made for the \(i\)th null hypothesis \((\theta^*_{i^*} \in \Theta_0)\) if and only if \(\theta^*_{i^*} \not\in C_i(\delta k_\BH/K)\). Hence \(\FDP = \FCP\) and \(\FDR = \FCR\). Consequently, the FCR guarantee of the BY procedure in \Cref{fact:BY} implies the BH guarantee of \(\FDR \leq \delta\) in \Cref{fact:BH}.
\label{fact:BHtoBY}
\end{fact}

Clearly, the BY procedure is a more powerful and general technique than the BH procedure, which is essentially the BY procedure for a specific choice of CI and selection rule. Hence, a guarantee on FDR by the BH procedure does not imply any guarantee on the FCR the BY procedure, while the reverse implication \emph{does} hold. We will see this paradigm also play out between the e-BY procedure and its multiple testing analog, the e-BH procedure.

\subsection{e-BH is a special case of e-BY}\label{sec:EBHtoEBY}
We will show this implication by reducing the multiple testing problem to an instance of the post-selection inference problem. The e-BH procedure takes e-values \(\mathbf{E} \coloneqq (E_1, \dots, E_K)\) as input, where \(E_i\) is an e-value w.r.t.\ \(\Theta^0_i\) for each \(i \in [K]\). Note that both e-BH and e-BY impose no requirements about the dependence between the e-values and e-CIs, respectively. Hence, \(E_1, \dots, E_K\) may be arbitrarily dependent.

\citet{wang_false_2020} show that FDR is controlled not only for e-BH, but for an entire class of ``self-consistent'' procedures. For a multiple testing procedure \(\mtproc: [0, \infty)^K \mapsto 2^{[K]}\) that takes e-values and outputs a rejection set \(\rejset\), \(\mtproc\) is \textit{self-consistent} \citep{blanchard_two_simple_2008} if and only if it satisfies the following property:
\begin{gather}
\begin{aligned}
    \text{For any realization of e-values } \mathbf{E} \in [0, \infty)^K, E_i \geq \frac{K}{\delta |\mtproc(\mathbf{E})|}\text{ for every }i \in \mtproc(\mathbf{E}).
\end{aligned}
    \label{eq:SelfConsistent}
\end{gather}
The e-BH procedure outputs the largest self-consistent set --- consequently, any other self-consistent set will be a subset of the rejection set output by the e-BH procedure.

\begin{fact}[FDR control of self-consistent procedures \citep{wang_false_2020}]
Any self-consistent procedure \(\mtproc\) that operates on e-values, i.e., that satisfies \eqref{eq:SelfConsistent}, ensures \(\FDR\leq\delta\).
\label{fact:EBH}
\end{fact}

\begin{corollary}[FCR control for e-BY implies FDR control for any self-consistent procedure]

Let \(\mtproc\) be a self-consistent multiple testing procedure, and define the selected parameters to correspond to the rejection set: \(\selset = \rejset = \mtproc(\mathbf{E})\). Define a family of e-values \(E_i^{\mathrm{ind}}(\theta) \coloneqq E_i \cdot \ind{\theta \in \Theta_i^0}\) and let the e-CIs corresponding to this family be:
\begin{align}
C_i^{\mathrm{ind}}(\alpha) \coloneqq \left\{\theta \in \Theta_i: E_i^{\mathrm{ind}}(\theta) \leq \frac{1}{\alpha}\right\} = \begin{cases}
\Theta_i \setminus \Theta_i^0 & \text{if }E_i > 1/\alpha\\
\Theta_i & \text{if }E_i \leq 1/\alpha
\end{cases}.
\end{align} The e-BY procedure outputs \(C_i^{\mathrm{ind}}(\delta |\mathcal{D}(\mathbf{E})|/K) = \Theta_i \setminus \Theta_i^0\) for each \(i \in \mathcal{D}(\mathbf{E})\). Thus, the e-BH procedure makes a false discovery if and only if the e-BY procedure does not cover a CI in \(\selset\). Hence, we have FDR control of the e-BH procedure through \Cref{thm:eBY}.

\label{corollary:eBHtoeBY}
\end{corollary}

Evidently, the e-BY procedure is more general than the e-BH procedure. We show in \Cref{sec:FSR} that we can use the e-BY procedure to also get control on a directional form of FDR that requires the procedure to output a sign or direction along with each rejection.

\subsection{Controlling the directional false discovery rate dFDR}
\label{sec:FSR}

We can also use the the e-BY procedure to provide guarantees beyond FDR for e-BH. Consider a specialized case of the multiple testing problem, where a scientist is performing only two-sided hypothesis tests e.g.\ discovering how the presence of certain genotypes have a significant effect on the baseline level of a certain hormone. The scientist may not have an a priori idea of which direction (positive or negative) the effect on a hormone a genotype could have. Thus, when rejecting the null hypothesis and making a discovery for some genotype-hormone interaction, it would also be invaluable to produce an estimate of the \textit{direction} of the effect. This problem of determining the direction along with significance was initiated by \citet{bohrer_multiple_threedecision_1979a,bohrer_optimal_multiple_1980}, and \citet{hochberg_multiple_classification_1986}. Building on this, BY showed that the BH procedure could ensure control of a directional version of the FDR when the operating on p-values for two-sided hypothesis tests through their FCR result for the BY procedure.

In this section, we describe how we can use e-BY to develop a direction determining version of e-BH, and also ensure control of a directional version of the FDR. For a notion of direction to be well-defined, we consider only parameter spaces where \(\Theta_i \subseteq \reals\), and for each parameter \(\theta_i^*\), we are testing the null hypothesis \(H_i^0: \theta_i^* = 0\). In addition to outputting a rejection set, we also require the multiple testing procedure to also assign a direction \(D_i \in \{\pm 1\}\) for each \(i \in \rejset\). Thus, we can define the directional FDR (\(\dFDR\)):
\begin{align}
    \dFDR \coloneqq \expect\left[\frac{\sum\limits_{i \in \rejset} \ind{D_i = 1, \theta_i^* \leq 0} + \ind{D_i = -1, \theta_i^* \geq 0}}{|\rejset|}\right].
\end{align} This definition of \(\dFDR\) coincides with the definition of mixed directional FDR defined in BY. The \(\dFDR\) is also nearly the same as the \textit{false sign rate} (\(\FSR\)) that is analyzed in \citet{stephens_false_discovery_2017,weinstein_online_control_2020} and the ``pure directional FDR'' discussed in BY. However, we will only consider the \(\dFDR\) since it is always larger than the other two directional error metrics. Consequently, these all of these metrics are equivalent when no \(\theta_i^*\) is exactly 0 --- \citet{tukey_philosophy_multiple_1991} argues happens this occurs in most realistic scenarios as many parameters can be off from 0 by an imperceptible amount so the distinction between these metrics may not be practically critical. We will also require that the input e-values are ``two-sided'' e-values. Specifically, let \(E_i \coloneqq (E_i^+ \vee E_i^-) / 2\), where \(E_i^+\) is an e-value w.r.t.\  \([0, \infty)\) and \(E_i^-\) is an e-value w.r.t.\  \((-\infty, 0]\), and two constituent e-values are inverses of each other \(E_i^+ = (E_i^-)^{-1}\). 

\begin{corollary}[e-BH with two-sided e-values controls \(\dFDR\)]
Let the selection set still be the output of the e-BH procedure , i.e.,\ \(\selset = \rejset_{\EBH}\). In addition, define the following direction decisions for each \(i \in \selset\):
\begin{align}
    D_i \coloneqq \begin{cases}
    -1 & \text{if }E_i^+ \geq 2K / (\delta k_\EBH)\\
    1 & \text{else}.
    \end{cases}
\end{align}

Define a family of e-values as so: \(E^{\mathrm{dir}}(\theta) \coloneqq E_i^+ \ind{\theta \geq 0} + E_i^- \ind{\theta \leq 0}\). We derive the following e-CI procedure from this family:
\begin{align}
    C_i^{\mathrm{dir}}(\alpha) \coloneqq \begin{cases}
        (-\infty, 0) & \text{if } E_i^+ \geq 2 / \alpha\\
        (0, \infty) & \text{if } E_i^- \geq 2 / \alpha\\
        \reals & \text{else}.
    \end{cases}
\end{align} Consequently, miscoverage occurs if and only if the direction of \(\theta_i^*\) for each \(i \in \selset\). Thus, the FCR guarantee from \Cref{thm:eBY} ensures that \(\dFDR \leq \delta\).
\end{corollary}

As a result, we have shown that e-BY procedure can be used to provide results for more general problems than just e-BH in the multiple testing scenario --- e-BY also provides \(\dFDR\) control in this directional variant of the multiple testing problem.
Thus, the FCR control provided by e-BY allows it to be useful, general tool for providing error control guarantees in a variety of problems.

\section{Weighted e-BY: weights for individual control of the size of each e-CI}
\label{sec:WeightedEBY}

The e-BY procedure in \Cref{def:eBY} assigns a equal confidence level to the CI constructed for each \(\theta_i^*\), but there may be cases where the user would desire tighter CIs for some parameters, and looser CIs for others. For example, the parameters we are estimating may be the severity of different side effects caused by a drug candidate. In that situation, we want to estimate more precisely the side effects that are life threatening or severely injurious to the recipient of the drug, and be willing to have larger CIs for estimating side effects that only cause minor ailments. We present a simple version of the weighted e-BY procedure where the weights are fixed beforehand. However, \citet{ignatiadis_e-values_unnormalized_2022} study the weighted e-BY procedure much more deeply and show that, surprisingly, the weights do not need to be normalized to sum to $K$ --- one may require the weights to be e-values themselves and still achieve valid FCR control. Further, they show that this approach of using e-values as weights is applicable to a weighted form of the BY procedure as well.

Let \(w_1, \dots, w_K\) be nonnnegative weights assigned to each of the \(K\) parameters and let their sum be bounded by \(K\). We can now prove a FCR guarantee about a weighted version of e-BY where the confidence level corresponding to the CI constructed for each \(\theta_i^*\) is \(1 - w_i \delta |\selset|/K\).

\begin{algorithm2e}[t]
\caption{The weighted e-BY procedure for constructing post-selection CIs with FCR control. Let \(w_1, \dots, w_K\) be nonnegative weights that sum to \(K\). The resulting constructed CIs have \(\FCR \leq \delta\), where \(\delta \in (0, 1)\) is a predetermined  level of error.\label{alg:WeightedEBY}}
\SetAlgoLined
{\nl Sample data \(\mathbf{X}\coloneqq (X_1, \dots, X_K)\) for each parameter being estimated --- these may be dependent.\\}
\nl
Let \(C_i\) produce \textit{marginal e-CIs} at confidence level \((1 - \alpha)\) for any \(\alpha \in (0, 1)\) for each \(i \in [K]\).

\nl Select a subset of parameters based on the data for CI construction: \(\selset \coloneqq
\selalg(\mathbf{X})\).\hfill \\
\nl
Set \(\alpha_i \coloneqq w_i \delta |\selset|/K\) for each \(i \in \selset\).\\
\nl Construct the CI \(C_i(\alpha_i)\) for \(\theta_i^*\) for each \(i \in \selset\).
\end{algorithm2e}

\begin{theorem}[Weighted e-BY controls FCR]
Let \(w_1, \dots, w_K\) be a set of nonnegative weights that sum to at most \(K\). The weighted e-BY procedure formulated in \Cref{alg:WeightedEBY} ensures \(\FCR \leq \delta\).
\end{theorem}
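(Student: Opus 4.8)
The plan is to mimic the proof of \Cref{thm:eBY} almost verbatim, carrying the per-parameter weight $w_i$ through the same chain of inequalities. The only structural change from the unweighted case is that each selected CI is reported at the confidence level $1 - w_i \delta |\selset|/K$ rather than $1 - \delta |\selset|/K$, so the defining equivalence $\theta_i \notin C_i(\alpha_i) \iff E_i(\theta_i) \ge 1/\alpha_i$ from \eqref{eqn:ECI} now reads $\theta_i \notin C_i(w_i \delta |\selset|/K) \iff E_i(\theta_i)\, w_i \delta |\selset|/K \ge 1$.

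First I would expand the definition of $\FCR$ and rewrite the miscoverage event using the e-CI construction, then apply the elementary bound $\ind{x>1}\le x$ for $x\ge 0$ to replace the indicator $\ind{E_i(\theta_i)\, w_i \delta |\selset|/K > 1}$ by $E_i(\theta_i)\, w_i \delta |\selset|/K$, exactly as in \Cref{thm:eBY}. The next two simplifications are also unchanged: bound $\ind{i\in\selset}\le 1$ so that the sum ranges over all of $[K]$, and bound the selection-size factor $|\selset|/(|\selset|\vee 1)\le 1$. Since each $E_i(\theta_i)\ge 0$, these steps yield
\begin{align*}
\FCR \le \sum_{i \in [K]} \frac{w_i \delta}{K}\, \expect\left[ E_i(\theta_i) \cdot \frac{|\selset|}{|\selset| \vee 1}\right] \le \sum_{i \in [K]} \frac{w_i \delta}{K},
\end{align*}
where the last inequality invokes the marginal e-value property $\expect[E_i(\theta_i)]\le 1$.

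The final step --- and the only place where the weight constraint enters --- is to use the hypothesis $\sum_{i\in[K]} w_i \le K$ to conclude $\sum_{i} w_i \delta/K \le \delta$. I do not expect a genuine obstacle here, since the whole argument is dependence-agnostic: it rests solely on the marginal property of each e-value and never touches the joint distribution of the $E_i(\theta_i)$. The one point that warrants a sentence of care is the factor $|\selset|/(|\selset|\vee 1)$, because $|\selset|$ is itself random and correlated with $E_i(\theta_i)$; bounding this factor by $1$ \emph{pointwise} before taking expectations (rather than attempting to evaluate its expectation) is precisely what decouples it from $E_i(\theta_i)$ and lets the e-value bound apply cleanly. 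This mirrors the mechanism in \Cref{thm:eBY}, and replacing the uniform coefficient $\delta/K$ by the weighted coefficient $w_i\delta/K$ leaves every inequality intact.
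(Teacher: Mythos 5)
Your proposal is correct and follows essentially the same argument as the paper's proof: expand the FCR, rewrite miscoverage via the e-CI inversion, apply $\ind{x \geq 1} \leq x$ together with $\ind{i \in \selset} \leq 1$, bound the factor $|\selset|/(|\selset| \vee 1)$ by $1$ pointwise, invoke $\expect[E_i(\theta_i)] \leq 1$, and finish with $\sum_{i \in [K]} w_i \leq K$. The point you flag about bounding the random, $E_i$-correlated factor $|\selset|/(|\selset|\vee 1)$ pointwise before taking expectations is exactly the mechanism the paper relies on as well.
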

\begin{proof}
The proof follows similarly to that of \Cref{thm:eBY}.
\begin{align}
    \text{FCR} &= \mathbb{E}\left[ \frac{\sum_{i \in \selset} \ind{\theta_i^* \notin C_i\left(w_i\delta|\selset|/K\right)} }{|\selset| \vee 1} \right]\\
&= \mathbb{E}\left[ \frac{\sum_{i \in [K]} \ind{E_i(\theta_i^*)w_i  \delta |\selset|/K > 1} \cdot \ind{i \in S} }{|\selset| \vee 1} \right]\\
    &\leq \sum_{i \in [K]} \mathbb{E}\left[ \frac{E_i(\theta_i^*) w_i \delta |\selset|} {K(|\selset| \vee 1)} \right] =  \sum_{i \in [K]} \frac{w_i\delta}{K} \mathbb{E}\bigg[E_i(\theta_i^*)  \cdot \frac{ |\selset|}{|\selset| \vee 1} \bigg] \leq \delta.
\end{align} Here, the last inequality is a consequence of \(\sum\limits_{i = 1}^K w_i \leq K\).
\end{proof}

\section{Improving e-BY when a CI-reporting condition is relaxed}
\label{sec:Improvement}

We briefly explain the reason requiring strictly decreasing components of $\mathbf C\in \mathrm{ECI}^{*}_K$  for strict domination in \Cref{sec:CIreport}.
Suppose that $K=2$, $S=\{1,2\}$ and
$C_1(0)=C_1(1)$ (i.e., we are certain about $\theta_1$ in a subset of $\Theta_1$, but no further information), and $C_2 $ is strictly decreasing.
The e-BY procedure   reports $(C_1(\delta),C_2(\delta))$.
Consider another procedure $\Phi$ which   reports  $(C_1(0),C_2( \delta+\epsilon)) \subsetneq(C_1(\delta),C_2(\delta))$ for this specific $(C_1,C_2)$ and
behaves as e-BY for all other CIs. Clearly, this procedure strictly dominates e-BY.
Intuitively, it should be possible to choose $\epsilon>0$ small enough so that the FCR of $\Phi$ is less than or equal to $\delta$ (but we did not find a proof).  Generally, having a constant $C_i(\alpha)$ for $\alpha$ in an interval  leads to the above situation.
This situation is not interesting for practice as often an e-CI is strictly decreasing. but it becomes relevant in case the parameter $\theta_i^*$ takes finitely many values. We discuss how we could improve on the e-BY procedure if the CIs are not strictly decreasing (and restriction \ref{item:SelectionOnly} is lifted) in \Cref{sec:Improvement}.

In this section we explore a minor improvement of e-BY by relaxing the restriction \ref{item:SelectionOnly} in \Cref{sec:CIreport} in the formulation of a CI reporting procedure, as well as the strict decreasing condition of the e-CIs.

For $(\selset,\mathbf C)\in 2^{[K]} \times \mathrm{ECI}_K$  where $S\ne \emptyset, [K]$,
we define
\begin{align}
\gamma(\selset,\mathbf C)&:=\sum_{i\in\selset}\sup_{\theta\in \Theta_i} t_i(\theta ),\\
\lambda(\selset,\mathbf C)&:=\sum_{i\in [K]\setminus S}\inf_{\theta\in \Theta_i} \left(t_i(\theta )\ind{t_i(\theta)>1}\right),\\
v (\selset,\mathbf C)&:= 1+\frac{ \lambda (\selset,\mathbf C)}{\gamma(\selset,\mathbf C)} ,
\end{align}
where   $t_i$  is given in \eqref{eq:defti}, and we set $\infty/\infty=0$.
Note that $\gamma(\selset,\mathbf C)<\infty $ if and only if for each $i \in S$ there exists $\alpha_i>0$  such that $C(\alpha_i)=\Theta_i$; similarly,
$\lambda(\selset,\mathbf C)>0$ if and only if there exists $i\in [K]\setminus S$ such that
$
C_i (\alpha_i) = \emptyset
$ for some  $\alpha_i\in (0,1)$.
The condition $v (\selset,\mathbf C)>1$ is equivalent to $\gamma(\selset,\mathbf C)<\infty $ and $\lambda(\selset,\mathbf C)>0$. (Admittedly, this is an uncommon situation.)

For a level $w \in (1,\infty)$,    we denote by
$$\Sigma(w): =\{ (\selset,\mathbf C)\in 2^{[K]} \times \mathrm{ECI}_K: v(\selset,\mathbf C) \ge  w\}.$$
One can check that the set $\Sigma(w) $ is not empty.
In particular,
if $S=\{1\}$ and $\mathbf C$ is the constant CI of $\Theta_1\times\emptyset ^{K-1}$ (i.e., no information for the first one, and sure false coverage for all others), then $t_1(\theta)=1$ for all $\theta \in \Theta$, and $t_i(\theta)=\infty$ for all $\theta \in \Theta_i$ and $i\ne 1$. In this case, $\gamma(\selset,\mathbf C)=1$, $\lambda (\selset,\mathbf C)=\infty$, and $v(\selset,\mathbf C)=\infty$.

For a given level $w\in (1,\infty)$, define a CI reporting procedure $\Phi$ by
\begin{enumerate}
\item if the input $(\selset,\mathbf C)$ is in $\Sigma(w)$, then report e-BY at level $w \delta$;
\item otherwise, report e-BY at level $  \delta$.
\end{enumerate}
Clearly, the above procedure   dominates e-BY at level $\delta$, and the domination is strict noting that $\Sigma(w)$ is not empty.
If we allow $w=1$, then it is precisely the e-BY procedure.
\begin{proposition}
The above procedure $\Phi$ has FCR at most $\delta$.
\end{proposition}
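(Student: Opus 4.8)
The plan is to mimic the Markov-inequality proof of \Cref{thm:eBY} but to pay for the enlarged level $w\delta$ on $\Sigma(w)$ by ``borrowing'' e-value budget from the unselected coordinates, whose total contribution is guaranteed to be large precisely when $(\mathcal S,\mathbf C^{\mathbf E})\in\Sigma(w)$. Write $A := \{(\mathcal S,\mathbf C^{\mathbf E})\in \Sigma(w)\}$ for the event that the input lands in $\Sigma(w)$; on $A$ the reported level is $\alpha_i = w\delta|\mathcal S|/K$ and on $A^c$ it is $\alpha_i = \delta|\mathcal S|/K$. Since $\theta_i\notin C_i^{\mathbf E}(\alpha_i)$ means $E_i(\theta_i)\ge 1/\alpha_i$, since $\ind{E_i(\theta_i)\ge 1/\alpha_i}\le \alpha_i E_i(\theta_i)$, and since $|\mathcal S|/(|\mathcal S|\vee 1)\le 1$, I would first reduce the FCR \eqref{eq:FCRdef} to
\begin{align*}
\FCR \le \frac{\delta}{K}\,\mathbb{E}\!\left[w\,\Sigma_{\mathrm{in}}\ind{A} + \Sigma_{\mathrm{in}}\ind{A^c}\right],\qquad \Sigma_{\mathrm{in}} := \sum_{i\in\mathcal S}E_i(\theta_i),\quad \Sigma_{\mathrm{out}} := \sum_{i\notin\mathcal S}E_i(\theta_i).
\end{align*}
Using $w\ind{A} + \ind{A^c} = 1 + (w-1)\ind{A}$, the right-hand side equals $\tfrac{\delta}{K}\bigl(\mathbb{E}[\Sigma_{\mathrm{in}}] + (w-1)\mathbb{E}[\Sigma_{\mathrm{in}}\ind{A}]\bigr)$, so it suffices to prove the single inequality $(w-1)\,\mathbb{E}[\Sigma_{\mathrm{in}}\ind{A}]\le \mathbb{E}[\Sigma_{\mathrm{out}}]$.

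The heart of the argument is a pointwise bound on $A$. From \eqref{eq:defti} and \eqref{eq:associated-K} one checks that the threshold of the realized e-CI is $t_i(\theta)=E_i(\theta)\vee 1$. Hence $\sup_\theta t_i(\theta)\ge E_i(\theta_i)$ gives $\Sigma_{\mathrm{in}}\le \gamma(\mathcal S,\mathbf C^{\mathbf E})$, while $t_i(\theta)\ind{t_i(\theta)>1}=E_i(\theta)\ind{E_i(\theta)>1}\le E_i(\theta)$, evaluated at $\theta=\theta_i$, gives $\inf_\theta\bigl(t_i(\theta)\ind{t_i(\theta)>1}\bigr)\le E_i(\theta_i)$ and therefore $\lambda(\mathcal S,\mathbf C^{\mathbf E})\le \Sigma_{\mathrm{out}}$. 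On $A$ we have $v(\mathcal S,\mathbf C^{\mathbf E})\ge w$, i.e.\ $\lambda\ge (w-1)\gamma$. Chaining these,
\begin{align*}
(w-1)\,\Sigma_{\mathrm{in}}\ \le\ (w-1)\,\gamma\ \le\ \lambda\ \le\ \Sigma_{\mathrm{out}} \qquad\text{on } A,
\end{align*}
so $(w-1)\Sigma_{\mathrm{in}}\ind{A}\le \Sigma_{\mathrm{out}}\ind{A}\le \Sigma_{\mathrm{out}}$, and taking expectations yields the desired inequality. The restriction that $\Sigma(w)$ contains only pairs with $S\ne\emptyset,[K]$ ensures both index sets in $\gamma$ and $\lambda$ are nonempty, so the chain is meaningful and $|\mathcal S|\ge 1$ on $A$.

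Finally I would close with the global conservation law: because each $E_i(\theta_i)$ is an e-value for the true $\theta_i$, we have $\mathbb{E}[\Sigma_{\mathrm{in}}+\Sigma_{\mathrm{out}}]=\sum_{i\in[K]}\mathbb{E}[E_i(\theta_i)]\le K$, whence $\FCR\le \tfrac{\delta}{K}\bigl(\mathbb{E}[\Sigma_{\mathrm{in}}]+\mathbb{E}[\Sigma_{\mathrm{out}}]\bigr)\le\delta$. Notably no dependence assumption is needed, since this last step only uses the marginal e-value property coordinate by coordinate, and $\mathcal S$ may be arbitrarily correlated with the e-values. I expect the main obstacle to be the pointwise chain: one must verify that the $\sup_\theta$ in $\gamma$ and the $\inf_\theta$ in $\lambda$ point in exactly the directions that bound $\Sigma_{\mathrm{in}}$ from above and $\Sigma_{\mathrm{out}}$ from below at the (unknown) true parameters---this is precisely what lets the constraint $\lambda\ge(w-1)\gamma$ convert into the spare budget that finances the enlarged level $w\delta$.
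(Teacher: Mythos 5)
Your proposal is correct and takes essentially the same approach as the paper's proof: you use the same event $A$, the same Markov-type step $\ind{E_i(\theta_i)\ge 1/\alpha_i}\le \alpha_i E_i(\theta_i)$, and the key identity $t_i(\theta)=E_i(\theta)\vee 1$ (the content of Lemma~\ref{lem:2}), so that your pointwise chain $(w-1)\Sigma_{\mathrm{in}}\le(w-1)\gamma\le\lambda\le\Sigma_{\mathrm{out}}$ on $A$ is precisely the paper's inequality \eqref{eq:leqzero}. Your rearrangement $w\ind{A}+\ind{A^{c}}=1+(w-1)\ind{A}$ and the closing bound $\expect[\Sigma_{\mathrm{in}}+\Sigma_{\mathrm{out}}]=\sum_{i\in[K]}\expect[E_i(\theta_i)]\le K$ merely repackage the paper's final chain of inequalities, so the two arguments coincide in substance.
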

\begin{proof}
Let $\selset$ be a random selection, $\mathbf E$ be the vector of e-values, and $\vecb{\theta}^*$ be the true parameter.
Denote by $A=\{(\selset,\mathbf C^\mathbf E)\in \Sigma(w)  \}$, that is, the event that     $v(\selset,\mathbf C^\mathbf E) \ge w$.
Note that by Lemma \ref{lemma:ECIEvalue}, if $\mathbf C^\mathbf E=\mathbf C$ occurs, then $E_i(\theta)=t_i(\theta)$ for all $\theta \in \Theta_i$ and $i\in [K]$ whenever $E_i(\theta)>1$,
where $t_i$ is given by \eqref{eq:defti}, that is,
$$ t_i(\theta)= \sup\left\{\frac{1}{\alpha}: \alpha \in [0,1),~\theta \not\in C_i(\alpha)\right\}.$$
In case $(\selset,\mathbf C^\mathbf E)=(\selset,\mathbf C)\in \Sigma(w)$, we have
\begin{align}
&\sum_{i\in    S}  (w-1)     E_i(\theta_i)  -
\sum_{i\in [K]\setminus  S}    E_i(\theta_i) \\
\le\ &  \sum_{i\in     S}  (w-1)  \sup_{\theta_i\in\Theta_i} t_i(\theta_i)
-   \sum_{i\in [K]\setminus  S}  \inf_{\theta_i\in\Theta_i}  t_i(\theta_i) \ind{t_i(\theta_i)>1}
\le 0.
\end{align}
Therefore, we have
\begin{equation}
\left(  \sum_{i\in     \mathcal  S}  (w-1)     E_i(\theta_i)  -
\sum_{i\in [K]\setminus \selset}    E_i(\theta_i) \right) \ind{A} \le 0.
\label{eq:leqzero}
\end{equation}
Using the above facts, we can compute the following upper bound on the FCR:
\begin{align}
&\mathrm{FCR}(\Phi)  \\
& = \expect\left[\frac{ \sum_{i\in \selset} \ind{\theta_i \not \in \Phi(\selset,\mathbf C^\mathbf E)}}{|\selset|\vee 1}\right]\\
&= \expect\left[\frac{ \sum_{i\in\mathcal   S} \ind{E_i(\theta_i) \ge K/(w \delta |\selset|)}}{| \selset|\vee 1}\ind{A} \right]\\
&\qquad+   \expect\left[\frac{ \sum_{i\in \selset} \ind{E_i(\theta_i) \ge K/(  \delta |\selset|)}}{|\selset|\vee 1}(1-\ind{A})\right]
\\& \le \expect\left[\frac{ 1}{| \selset|\vee 1}\sum_{i\in  \selset} \frac{w \delta |\selset| E_i(\theta_i)}{K} \ind{A} \right]  +   \expect\left[\frac{1}{|\selset|\vee 1} \sum_{i\in \selset}  \frac{ \delta |\selset| E_i(\theta_i) }{K}(1-\ind{A})\right]
\\& \le  \expect\left[   \frac{1}{K }   \sum_{i\in     \mathcal  S}  w \delta   E_i(\theta_i) \ind{A}\right]+
\frac{\delta}{K }
\expect\left[ \sum_{i\in [K]}       E_i(\theta_i)  (1-\ind{A})\right] .
\end{align}

Now, we can group the terms that involve \(\ind{A}\) together to simplify our bound on the FCR, and piece together the term we see in \eqref{eq:leqzero}.
\begin{align}
\FCR(\Phi)& \le
\frac{\delta }{K }  \sum_{i\in [K]}    \expect\left[      E_i(\theta_i)  \right]
+  \frac{\delta }{K } \expect \left [  \left(\sum_{i\in     \mathcal  S}  w     E_i(\theta_i)  -
\sum_{i\in [K]}    E_i(\theta_i) \right) \ind{A}
\right]
\\& \le
\delta
+  \frac{\delta }{K } \expect \left [  \left(  \sum_{i\in     \mathcal  S}  (w-1)     E_i(\theta_i)  -
\sum_{i\in [K]\setminus \selset}    E_i(\theta_i) \right) \ind{A}
\right]\le \delta,
\end{align}
where in the last inequality we used \eqref{eq:leqzero}.
Therefore, $\mathrm{FCR}(\Phi)  \le \delta.$
 \end{proof}

We note that this improvement is not very useful as $v(\selset,\mathbf C)>w$ is a strong requirement and it is  only satisfied by limited choices of the input.

Moreover, we clearly see how this procedure uses the information of $\mathbf C_i$ for $i\not\in\selset$, which is used to compute $\lambda(\selset,\mathbf C)$, thus violating the requirement (b) in Section \ref{sec:CIreport}.
 Moreover, all input CIs in $\Sigma(w)$  are not strictly decreasing since we need $C_i(\alpha_i)=\Theta_i$ for some $\alpha_i>0$.

\section{Generalized e-CIs and deriving Chernoff CIs}
\label{sec:BatchChernoff}
Extending the notion of e-CI, we can also define a generalized e-CI from a family of e-values that are not only parameterized by a parameter, \(\theta\), but also \(\alpha' \in (0, 1)\) indicating the desired level at which the e-CI is most tight at. Thus, for a family of e-values \(E(\theta, \alpha')\) for \(\theta \in \Theta\) and \(\alpha' \in (0, 1)\), we can define the following generalized e-CI:
\begin{align}
    C^{\alpha'}(\alpha) \coloneqq \left\{\theta \in \Theta: E(\theta, \alpha') < \frac{1}{\alpha}\right\}.
\end{align} Note that \(\alpha'\) is fixed for the e-CI across all choices of \(\alpha \in (0, 1)\). Generalized e-CIs are particularly useful in situations where we have a collection of e-CIs for the same parameter space \(\Theta\), but may be tighter or looser at different values of \(\alpha\). Hence, it is a way of grouping such e-CIs together in one object, and also clarifying that \(\alpha'\) is a fixed parameter for each e-CI in the generalized e-CI and \textit{does not change} with the confidence level \(1 - \alpha\).

Such a situation arises in the case where our e-values are derived from Chernoff bounds. Consider the case where we draw \(n\) i.i.d.\ samples \(X_1, \dots, X_n\) from a distribution that is bounded in \([0, 1]\). Let \(\mu \coloneqq \expect[X_i]\) --- we can derive the following inequality using Hoeffding's lemma:
\begin{align}
    \expect[\exp(\lambda(X_i - \mu)] \leq \exp\left(\frac{\lambda^2}{8}\right).
\end{align}
Let \(\widehat{\mu}_n\) denote the sample mean. Consequently, we can construct the following generalized e-CI from this bound:
\begin{align}
    C^{\alpha'\sdash\mathrm{Hoef}}(\alpha) \coloneqq \left(\widehat{\mu}_n \pm \sqrt{\frac{\log(2 / \alpha)}{2n}} \cdot \frac{\log(2 / \alpha) + \log(2 / \alpha')}{2\sqrt{\log(2 / \alpha')\log(2 / \alpha)}} \right).
\end{align} In contrast, the typical CI constructed from inverting Hoeffding's inequality is as follows:
\begin{align}
    C^{\mathrm{Hoef}}(\alpha) \coloneqq \left(\widehat{\mu}_n \pm \sqrt{\frac{\log(2 / \alpha)}{2n}} \right) = C^{\alpha\sdash\mathrm{Hoef}}(\alpha).
\end{align}

Consider the use of \(C^{\alpha'\sdash\mathrm{Hoef}}\) and \(C^{\mathrm{Hoef}}\) in the e-BY and BY procedures, respectively. As a heuristic, we set \(\alpha' = \delta\). For the selection set \(\selset\), and the same desired level of FCR control \(\delta\), we get the following intervals:
\begin{align}
    C^{\delta\sdash\mathrm{Hoef}}\left(\frac{\delta |\selset|}{K}\right) &= \left(\widehat{\mu}_n \pm \sqrt{\frac{\log\left(\frac{2K}{\delta |\selset|}\right)}{2n}} \cdot \frac{\log\left(\frac{2}{\delta}\right) + \log\left(\frac{2K}{\delta|\selset|}\right)}{2\sqrt{\log\left(\frac{2}{\delta}\right)\log\left(\frac{2K}{\delta |\selset|}\right)}} \right),\\
    C^{\mathrm{Hoef}}\left(\frac{\delta |\selset|}{K\ell_K}\right) &= \left(\widehat{\mu}_n \pm \sqrt{\frac{\log\left(\frac{2K\ell_K}{\delta |\selset| }\right)}{2n}}\right).
\end{align}

This results in the following result about \(\selset\).
\begin{proposition}
\(C^{\delta\sdash\mathrm{Hoef}}\left(\delta |\selset|/K\right)\) is as tight as \(C^{\mathrm{Hoef}}\left(\delta |\selset|/(K\ell_K)\right)\) when
\begin{align}
    |\selset| \geq \frac{K}{\exp\left(2\sqrt{\log\left(\frac{2}{\delta}\right) \log \ell_K}\right)}.
\end{align}
\label{prop:SelsetBound}
\end{proposition}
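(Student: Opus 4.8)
The plan is to reduce the comparison of the two intervals to a single scalar inequality and then solve it explicitly for $|\selset|$. Both $C^{\delta\sdash\mathrm{Hoef}}(\delta|\selset|/K)$ and $C^{\mathrm{Hoef}}(\delta|\selset|/(K\ell_K))$ are centered at the same sample mean $\widehat\mu_n$ and both carry a common factor $1/\sqrt{2n}$, so one interval is tighter exactly when its half-width (the $\pm$ radius) is no larger. First I would strip away the common center and the common $1/\sqrt{2n}$ factor, and rewrite the two radii using the shorthand $a := \log(2/\delta)$ and $b := \log\!\left(2K/(\delta|\selset|)\right)$.

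The main simplification is of the e-BY radius: the leading factor $\sqrt{\log(2K/(\delta|\selset|))} = \sqrt{b}$ cancels the $\sqrt{b}$ sitting inside the denominator $2\sqrt{\log(2/\delta)\log(2K/(\delta|\selset|))} = 2\sqrt{ab}$, collapsing the radius to $(a+b)/(2\sqrt{a})$. The BY radius is $\sqrt{\log(2K\ell_K/(\delta|\selset|))} = \sqrt{b + \log\ell_K}$ by additivity of the logarithm. Hence ``e-BY at least as tight as BY'' is equivalent to
\[
\frac{a+b}{2\sqrt{a}} \le \sqrt{b + \log\ell_K}.
\]
Both sides are nonnegative, so I would square to get $(a+b)^2 \le 4a(b + \log\ell_K)$, then expand and cancel the $4ab$ cross terms to reach the clean form $(a-b)^2 \le 4a\log\ell_K$.

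The observation that makes the threshold explicit is that $b - a = \log(K/|\selset|)$, which is nonnegative since $|\selset| \le K$; thus $(a-b)^2 = (\log(K/|\selset|))^2$. Taking square roots (valid because both $\log(K/|\selset|) \ge 0$ and $\log\ell_K \ge 0$) gives $\log(K/|\selset|) \le 2\sqrt{\log(2/\delta)\log\ell_K}$, and exponentiating and rearranging yields exactly the stated bound $|\selset| \ge K/\exp\!\left(2\sqrt{\log(2/\delta)\log\ell_K}\right)$.

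There is no deep obstacle here; the argument is purely algebraic. The only steps requiring care are the cancellation that collapses the e-BY radius to $(a+b)/(2\sqrt{a})$, and the sign bookkeeping when squaring and then taking square roots --- in particular verifying $b \ge a$ (equivalently $|\selset| \le K$) so that $(a-b)^2$ rewrites as $(\log(K/|\selset|))^2$ on the correct branch. I would also remark that every manipulation is an equivalence, so the derived condition is both necessary and sufficient, matching the ``as tight as or tighter than'' phrasing of the statement.
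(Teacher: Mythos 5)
Your proof is correct and follows essentially the same route as the paper's: both reduce the comparison of half-widths to the inequality $\log(2/\delta) + \log\bigl(2K/(\delta|\selset|)\bigr) \leq 2\sqrt{\log(2/\delta)\log\bigl(2K\ell_K/(\delta|\selset|)\bigr)}$ and solve the resulting quadratic in $\log|\selset|$, using $|\selset| \leq K$ to pick out the lower bound. The only cosmetic difference is that you complete the square to $(a-b)^2 \leq 4a\log\ell_K$ and take roots on the nonnegative branch, whereas the paper invokes the quadratic formula and discards the upper endpoint of the resulting interval --- the same algebra in slightly different form.
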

\begin{proof}
We note that \(C^{\delta\sdash\mathrm{Hoef}}\left(\frac{\delta |\selset|}{K}\right)\) is tighter than \(C^{\mathrm{Hoef}}\left(\frac{\delta |\selset|}{K\ell_K}\right)\) when:
\begin{align}
    \log\left(\frac{2}{\delta}\right) + \log\left(\frac{2K}{\delta | \selset|}\right) \leq 2\sqrt{\log\left(\frac{2}{\delta}\right)\log\left(\frac{2K\ell_K}{\delta | \selset|}\right)}.
\end{align}

This is an inequality on a quadratic expression w.r.t.\ \(\log
|\selset|\). Hence, we can rearrange and solve by quadratic formula. This gets us the following interval for \(\log |\selset|\) where the above inequality holds:
\begin{align}
    \log |\selset| \in \left[\log K \pm 2\sqrt{\log\left(\frac{2}{\delta}\right)\log \ell_K}\right].
\end{align} Since we know \(|\selset|\leq K\), we are only interested in the lower bound on this interval, and we get our desired result from the lower bound.
\end{proof}

In \Cref{prop:SelsetBound}, we see that the lower bound for \(|\selset|\), as a proportion of \(K\), decreases as \(K\) increases. Thus, \(C^{\delta\sdash\mathrm{Hoef}}\left(\delta |\selset|/K\right)\) is tighter than \(C^{\mathrm{Hoef}}\left(\delta |\selset|/(K\ell_K)\right)\) for an increasing proportion of possible values of \(|\selset|\) as \(K\) grows. We see this advantage of the e-BY procedure over the BY procedure reflected in our simulation results in \Cref{sec:Simulations} --- the CIs output by the e-BY procedure are tighter than the CIs of the BY procedure (under no assumptions on selection rule or dependence) as \(K\) grows.

\section{FCR and other measures of statistical validity}
\label{sec:CoverageTypes}

We will elaborate on the points made in \Cref{sec:Intro} about the other error metrics one may consider applicable to the post-selection inference problem.

\paragraph{Conditional coverage requires knowing about selection rule a priori.} Before discussing FCR control, we can first consider if a coverage guarantee is possible when we condition on a parameter being selected. In the majority of this paper, we have assumed that each \(C_i\) is a \textit{marginal} CI, i.e.,\ for each \(i \in [K]\), \(\prob{\theta_i^* \in C_i(\alpha)} \geq 1 - \alpha\) for every \(\alpha \in (0, 1)\). For conditional coverage, we want a confidence level (\(1-\alpha_i\)), for each \(i \in [K]\), such that the resulting CI, \(C_i(\alpha_i)\), ensures \(\prob{\theta_i^* \in C_i(\alpha_i) \mid i \in \selset} \geq 1 - \delta\). However, BY noted that no such correction can be made without knowing the selection rule. To illustrate this, consider the following toy example. Assume that the parameters being estimated are means, and the data distribution for each parameter is Gaussian with variance 1, i.e.,\ \(X_i \sim \mathcal{N}(\theta_i^*, 1)\). Then, the CI for the \(i\)th parameter is \(C_i(\alpha) = [X_i - z_{\alpha_i / 2}, X_i + z_{\alpha_i / 2}]\) where \(z_\alpha\) is the \((1 - \alpha)\)-quantile of the standard normal distribution. Now, select the \(i\)th parameter only if \(C_i(\alpha_i)\), the corrected CI, covers solely positive values. If \(\theta_i^* \leq 0\), the conditional coverage is 0. Thus, no guarantees can be made about conditional coverage without knowing or constraining the selection rule.

Note that if we consider a constraint on the joint probability of a parameter being selected and miscovered instead, i.e.,\  \(\prob{\theta_i^* \notin C_i(\alpha_i), i \in \selset} \leq \alpha \), then the CI guarantee itself ensures that this constraint is satisfied, i.e.,\ \(\prob{\theta_i^* \notin C_i(\alpha_i)} \leq \alpha\). The fact that we may bound this joint probability is what motivates the possibility of the use of FCR as a error metric we can control.

In addition, the refutation of conditional coverage guarantees is intended for the general setting, where there is no limitation of the choice of the selection rule. Thus, methods that do provide conditional coverage guarantees must depend on the specific selection rule being used. \hbox{\citet{zhong_biasreduced_estimators_2008}} and \citet{weinstein_selection_adjusted_2013c} follow this line of inquiry and derive explicit CIs that have the conditional coverage guarantee under specific selection rules and assumptions on the families of parameters. The difficulty in this approach is that the results provided are narrowly applicable to the predetermined selection rule and data distribution families --- it requires the user to derive new conditional CIs for different combinations of selection rules and family of distributions. \citet{weinstein_online_control_2020} come to a similar conclusion about the difficulty of using conditional CIs in an online version of the post-selection inference problem, and also recommend the marginal CI approach used in both BY and e-BY. Further, they note that an additional deficit of the conditional coverage approach is that the conditionally valid CIs are not necessarily ``consistent'' with the selection procedure. To refer to the toy example, the conditional CIs for a selection rule that aims to only select parameters that have CIs that are completely positive must also include negatives (otherwise the toy example still holds). In this sense, the produced conditional CIs are inconsistent with the goal of the selection rule. Hence, FCR control circumvents these challenges, at the cost of providing a less powerful error guarantee.

\paragraph{FCR control results in smaller CIs than simultaneous coverage.} Another type of error control to consider is a simultaneous guarantee over the CIs of all selected parameters, i.e.,\ \(\prob{\forall i \in \selset: \theta_i^* \in C_i(\alpha_i)} \geq 1 - \delta\), for some choices of \(\alpha_i \in [0, 1]\) for each \(i \in \selset\). This error is called the \textit{simultaneous over selected (SoS)} criterion in \citet{benjamini_confidence_intervals_2019} --- they provide methods for controlling the SoS for a few, specific selection rules. Note that control of the SoS criterion implies control of the FCR, but not necessarily the other way around. Hence, SoS control is strictly stronger than FCR control. The only known way to achieve this simultaneous coverage for arbitrary selection rules, however, is to make a Bonferroni correction and ensure simultaneous coverage of all the CIs, i.e.,\ for both selected and non-selected parameters. If all confidence levels are equivalent, this implies \(\alpha_i = \delta / K\). This is a much more conservative level of correction than the BY and e-BY procedures.

\end{document}